\newtheorem{theorem}{Theorem}[section]
\newtheorem{lemma}[theorem]{Lemma}
\theoremstyle{definition}
\newtheorem{definition}[theorem]{Definition}
\newtheorem{corollary}[theorem]{Corollary}
\theoremstyle{remark}
\numberwithin{equation}{section}
\begin{document}

\title{Spectra of the lower triangular matrix $\mathbb{B}(r_1,\dots , r_l; s_1, \dots, s_{l'})$ over $c_0$}


\author{Sanjay Kumar mahto}
\address{Department of Mathematics, Indian Institute of Technology Kharagpur, Kharagpur 721302, India}
\curraddr{}
\email{kumarmahtosanjay@iitkgp.ac.in, skmahto0777@gmail.com}
\thanks{}

\author{Arnab Patra}
\address{Department of Mathematics, Indian Institute of Technology Kharagpur, Kharagpur 721302, India}
\curraddr{}
\email{arnptr91@gmail.com}
\thanks{}

\author{P. D. Srivastava}
\address{Department of Mathematics, Indian Institute of Technology Kharagpur, Kharagpur 721302, India}
\curraddr{}
\email{ pds@maths.iitkgp.ac.in}
\thanks{}

\subjclass[2010]{47A10, 47B37 }

\keywords{spectra and fine spectra, sequence space, lower triangular double band matrix}

\date{}

\dedicatory{}

\begin{abstract}
The spectra and fine spectra of the lower triangular matrix $\mathbb{B}$ $(r_1,\dots , r_l;$ $ s_1, \dots, s_{l'})$ over the sequence space $c_0$ are determined. The diagonal and sub-diagonal entries of the matrix consist of two oscillatory sequences $r=(r_{k (\text{mod} \ l)+1})$ and $s= (s_{k(\text{mod} \ l')+1})$ respectively, whereas the rest of the entries of the matrix are zero. In particular, the spectra and fine spectra of the lower triangular matrix $\mathbb{B}(r_1,\dots , r_4; s_1, \dots, s_{6})$ over $c_0$ are discussed.
\end{abstract}

\maketitle

\section{Introduction}
Spectral theory plays an important role in the study of functional analysis, classical quantum mechanics etc. Many properties of a linear operator, for example differential operator, matrix operator etc., can be discussed by knowing the spectra of the operator.\par
Study of fine spectra of an infinite matrix over a sequence space has received much attention in recent years. Wenger \cite{wenger1975fine} has determined the fine spectra of the H$\ddot{\text{o}}$lder summability operators over the space of convergent sequences $c$. The fine spectra for weighted mean operators are determined by Rhoades in \cite{rhoades1983fine} and \cite{rhoades1989fine}. Gonzalez \cite{gonzalez1985fine} has computed the fine spectra of the Ces$\grave{\text{a}}$ro operator in the space of $p$-absolutely convergent sequences $l_p(1<p<\infty)$, whereas Reade \cite{reade1985spectrum} computed the spectrum of Ces$\grave{\text{a}}$ro operator in the space of null sequences $c_0$. The spectrum of Rhaly operators is determined by Yildirim over $c_0$ and $c$ in \cite{yildirim1996spectrum}.
 Akhmedov and Ba\c{s}ar have examined the fine spectrum of Ces$\grave{\text{a}}$ro operator and difference operator $\Delta$ over $bv_p(1\leq p < \infty)$ in \cite{akhmedov2008fine} and \cite{akhmedov2007fine} respectively. Altay and Ba\c{s}ar have considered the difference operators $\Delta$ and $B(r,s)$ over $c_0$ and $c$ in \cite{altay2004fine} and \cite{altay2005fine} respectively, whereas Furkan and Bilgi\c{c} have studied $B(r,s)$ over $l_p$ and $bv_p$ in \cite{bilgicc2008fine}. The spectrum and fine spectrum of the operators $\Delta_v$ and $\Delta_{uv}$ over $l_1$ have been investigated by Srivastava and Kumar in \cite{srivastava2012fine} and \cite{srivastava2012fine2} respectively. Panigrahi  and Srivastava determined the spectrum and fine spectrum of the operators $\Delta ^2_{uv}$ \cite{panigrahi2012spectrum} and $\Delta ^{2}_{uvw}$ \cite{panigrahi2012spectrum2} over $c_0$ and $l_1$ respectively. Recently, Birbonshi and Srivastava \cite{birbonshi2017some} and Patra et al. \cite{patra2017some} have made a study on the fine spectra of $n$th band triangular matrices for $n>1$.\par
 In this paper, we have studied the spectra and fine spectra of the generalized difference matrix $\mathbb{B}(r_1,\dots , r_l; s_1, \dots, s_{l'})$ in which the diagonal entries consist of a sequence whose terms are oscillating between the points $r_1, r_2, \dots , r_l$ and the sub-diagonal entries consist of an oscillatory sequence whose terms are oscillating between the points $s_1, s_2, \dots , s_{l'}$. Furthermore, the spectra and fine spectra of the matrix $\mathbb{B}(r_1,\dots , r_4; s_1, \dots, s_{6})$ are also discussed.\par
This paper is organized as follows: in section 2, some fundamental results are given. The section 3 is devoted to the study of the spectrum and the Goldberg's classification of the spectrum for the generalized difference matrix $\mathbb{B}(r_1,\dots , r_l; s_1, \dots, s_{l'})$. In section 4, the study of spectra and fine spectra of the matrix $\mathbb{B}(r_1,\dots , r_4; s_1, \dots, s_{6})$ are also done.

\section{Preliminaries}
Let $X$ and $Y$ be Banach spaces and $T: X \rightarrow Y$ be a bounded linear operator. The Banach space of all bounded linear operators on $X$ into $Y$ is denoted by $B(X,Y)$. We denote $B(X,X)$ by $B(X)$. The adjoint $T^*$ of a bounded linear operator $T\in B(X)$ is a bounded linear operator on the dual $X^*$ of $X$. That is, $T^* \in B(X^*)$ and is defined as $(T^* f)(x)=f(Tx)$ for all $f\in X^*$ and $x\in X$.\par
Let $X \neq \{0\}$ be a complex normed space and $T:D(T)\rightarrow X$ be a linear operator defined over $D(T) \subseteq X$. We associate an operator $T_{\lambda}$ with $T$ by $T_{\lambda} = T-\lambda I$, where $\lambda$ is a complex number and $I$ is the identity operator on $D(T)$. The resolvent operator of $T$ is denoted by $R_{\lambda}(T)$ and is defined as $R_{\lambda}(T)=T_{\lambda}^{-1}=(T-\lambda I)^{-1}$.
\begin{definition}(Regular value)\cite{kreyszig1989introductory}
A complex number $\lambda$ is said to be a regular value of a linear operator $T: D(T) \rightarrow X$ if and only if the following conditions are satisfied:\\
  (R1) $ T_{\lambda}^{-1}$ exists\\
  (R2) $T_{\lambda}^{-1}$ is bounded\\
  (R3) $T_{\lambda}^{-1}$ is defined on a set which is dense in $X$.
  \end{definition}

  \noindent The set of all regular values of the linear operator $T$ is called \emph{resolvent set} and is denoted by $\rho (T)$. The complement $\sigma(T) = \mathbb{C} - \rho(T)$ is called the \emph{spectrum} of $T$ and the members of the spectrum are called \emph{spectral values} of $T$. The spectrum $\sigma(T)$ is further partitioned into the following three disjoint sets:\\
(i)The \emph{point spectrum (discrete spectrum)} is the set $\sigma_{p}(T)$ of all complex numbers for which the resolvent operator $T_{\lambda}^{-1}$ does not exist. The members of the point spectrum are called \emph{eigen values} of $T$.\\
(ii)The \emph{continuous spectrum} is the set $\sigma_c(T)$ of all complex numbers for which the properties (R1) and (R3) satisfy but the property (R2) does not satisfy.\\
(iii)The \emph{residual spectrum} is the set $\sigma_{r}(T)$ of all complex numbers for which the property (R1) satisfy but the property (R3) does not satisfy.

Let $R(T_{\lambda})$ denotes the range of the operator $T_{\lambda}$. Goldberg \cite{goldberg2006unbounded} has further classified the spectrum using the following six properties of $R(T_{\lambda})$ and $T_{\lambda}^{-1}$:\\
(I) $R(T_{\lambda}) = X$\\
(II) $R(T_{\lambda}) \neq X$ but $\overline{R(T_{\lambda})} = X$\\
(III) $\overline{R(T_{\lambda})} \neq X$\\
and\\
(1) $T_{\lambda}^{-1}$ exists and is bounded\\
(2) $T_{\lambda}^{-1}$ exists but is not bounded\\
(3) $T_{\lambda}^{-1}$ does not exist.\\
Combining the above properties, the Goldberg's classifications of the spectrum are given in the Table \ref{table1}.

\begin{table}
\begin{center}
\begin{tabular}{c|c c c}
& I & II & III\\
\hline
1& $\rho(T,X)$ &--- & $\sigma_r(T,X)$\\
2& $\sigma_c(T,X)$ & $\sigma_c(T,X)$& $\sigma_r(T,X)$\\
3& $\sigma_p(T,X)$ &$\sigma_p(T,X)$ & $\sigma_p(T,X)$\\
\end{tabular}
\caption{Subdivisions of spectrum of a bounded linear operator}\label{table1}
\end{center}
\end{table}

\begin{theorem}\cite{taylor1957general}\label{th_adjointontobounded}
Let $T$ be a bounded linear operator on a normed linear space $X$, then $T$ has a bounded inverse if and only if $T^*$ is onto.
\end{theorem}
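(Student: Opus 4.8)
The plan is to prove both implications of the equivalence, reading ``$T$ has a bounded inverse'' in the standard sense that $T$ is bounded below, i.e.\ that there is a constant $c>0$ with $\|Tx\| \ge c\|x\|$ for all $x \in X$ (equivalently, $T$ is injective and the inverse $T^{-1}$ defined on $R(T)$ is bounded). The two tools I expect to rely on are the Hahn--Banach theorem and the open mapping theorem applied to $T^*$ on the Banach space $X^*$.

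For the forward direction, I would assume $T$ is bounded below with constant $c$ and show $T^*$ is onto. Given an arbitrary $f \in X^*$, define a functional $\phi$ on the range $R(T)$ by $\phi(Tx) = f(x)$; this is well defined because $T$ is injective, and the estimate $|\phi(Tx)| = |f(x)| \le \|f\|\,\|x\| \le (\|f\|/c)\,\|Tx\|$ shows that $\phi$ is bounded on $R(T)$. Extending $\phi$ to all of $X$ by Hahn--Banach produces $g \in X^*$ satisfying $g(Tx) = f(x)$ for every $x$, that is, $(T^*g)(x) = f(x)$, so $T^*g = f$. Since $f$ was arbitrary, $T^*$ is surjective.

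For the converse, assume $T^*$ is onto. Since $X^*$ is complete and $T^*$ is a bounded surjection between Banach spaces, the open mapping theorem yields a $\delta > 0$ such that the image under $T^*$ of the unit ball of $X^*$ contains the ball of radius $\delta$. To turn this into a lower bound for $T$, fix $x \in X$ and use Hahn--Banach to choose $f \in X^*$ with $\|f\| = 1$ and $f(x) = \|x\|$; then (for any scaling factor slightly below $1$, to stay strictly inside the ball) there is $g \in X^*$ with $\|g\| \le 1$ and $T^*g = \delta f$. Evaluating at $x$ gives $\delta\|x\| = (\delta f)(x) = (T^*g)(x) = g(Tx) \le \|Tx\|$, so $\|Tx\| \ge \delta\|x\|$ and $T$ is bounded below, hence has a bounded inverse on its range.

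I expect the converse to be the main obstacle, since it is the only place where completeness and the open mapping theorem enter, and one must be careful to apply the theorem to $T^*$ on the always-complete dual $X^*$ rather than to $T$ on $X$ itself. The only remaining care is the routine bookkeeping about open versus closed balls in the inclusion supplied by the open mapping theorem, which affects only the value of the constant $\delta$ and not the conclusion.
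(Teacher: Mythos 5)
Your proof is correct. The paper offers no proof of this theorem at all---it is quoted as a known preliminary from Taylor's book---and your argument (reading ``bounded inverse'' as bounded below, Hahn--Banach extension of $f \circ T^{-1}$ for the forward direction, and the open mapping theorem applied to $T^*$ on the always-complete dual $X^*$ for the converse) is exactly the standard proof of that cited result, with the scaling-inside-the-open-ball issue handled appropriately.
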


\begin{lemma}\cite{stieglitz1977matrixtransformationen}\label{lemma_b(c0;c0)}
An infinite matrix $A=(a_{nk})$ gives rise to a bounded linear operator $T\in B(c_0)$ from $c_0$ to itself if and only if\\
(i) each row of $A$ belongs to the space $l_1$ and supremum of their $l_1$ norms is bounded.\\
(ii) each column of $A$ belongs to $c_0$.
The operator norm of $T$ is the supremum of the $l_1$ norms of the rows.
\end{lemma}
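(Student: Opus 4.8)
The plan is to prove both implications of the equivalence and then pin down the operator norm, treating the matrix $A=(a_{nk})$ as acting on $x=(x_k)\in c_0$ by $(Ax)_n=\sum_k a_{nk}x_k$. For the sufficiency direction, suppose (i) and (ii) hold and set $M=\sup_n\sum_k|a_{nk}|<\infty$. First I would check that $(Ax)_n$ is well defined: for fixed $n$ the row lies in $l_1$ and $x$ is bounded, so $\sum_k|a_{nk}||x_k|\le\|x\|_\infty\sum_k|a_{nk}|<\infty$, giving absolute convergence. The same estimate yields $|(Ax)_n|\le M\|x\|_\infty$ for every $n$, hence $\|Ax\|_\infty\le M\|x\|_\infty$, so $T$ is bounded with $\|T\|\le M$ --- provided $Ax$ really lands in $c_0$.

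Establishing $Ax\in c_0$ is the step where both hypotheses are used simultaneously, and I expect it to be the main obstacle. Given $\varepsilon>0$, choose $N$ with $|x_k|<\varepsilon$ for $k\ge N$ (possible since $x\in c_0$) and split $(Ax)_n=\sum_{k<N}a_{nk}x_k+\sum_{k\ge N}a_{nk}x_k$. The finite sum $\sum_{k<N}a_{nk}x_k$ tends to $0$ as $n\to\infty$ because each of the finitely many columns satisfies $a_{nk}\to0$ by (ii); the tail is controlled by $\bigl|\sum_{k\ge N}a_{nk}x_k\bigr|\le\varepsilon\sum_k|a_{nk}|\le\varepsilon M$ using (i). Thus $\limsup_n|(Ax)_n|\le\varepsilon M$, and letting $\varepsilon\to0$ gives $(Ax)_n\to0$, i.e.\ $Ax\in c_0$.

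For the necessity direction, assume $T\in B(c_0)$ with $\|T\|=M$. Applying $T$ to the standard basis vector $e^{(k)}\in c_0$ produces the $k$-th column of $A$, and since $Te^{(k)}\in c_0$ this is exactly condition (ii). To obtain (i), I would fix $n$ and test against the finitely supported unimodular sequence $x^{(N)}$ defined by $x^{(N)}_k=\overline{a_{nk}}/|a_{nk}|$ when $a_{nk}\ne0$ and $k\le N$, and $x^{(N)}_k=0$ otherwise; then $x^{(N)}\in c_0$, $\|x^{(N)}\|_\infty\le1$, and $(Ax^{(N)})_n=\sum_{k\le N}|a_{nk}|$. Boundedness of $T$ forces $\sum_{k\le N}|a_{nk}|\le\|Ax^{(N)}\|_\infty\le M$, and letting $N\to\infty$ shows each row lies in $l_1$ with $\sum_k|a_{nk}|\le M$; taking the supremum over $n$ gives (i). Equivalently, one may invoke the duality $(c_0)^*\cong l_1$: for each $n$ the map $x\mapsto(Ax)_n$ is a bounded functional whose norm equals $\sum_k|a_{nk}|$.

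Finally, the norm statement follows by combining the two bounds: the sufficiency estimate gives $\|T\|\le M=\sup_n\sum_k|a_{nk}|$, while the test-sequence argument in the necessity direction gives $\sup_n\sum_k|a_{nk}|\le\|T\|$, so $\|T\|=\sup_n\sum_k|a_{nk}|$, the supremum of the $l_1$ norms of the rows.
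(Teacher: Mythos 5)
Your proposal is correct and complete. There is, however, nothing in the paper to compare it against: the paper states this lemma as a quoted result from the Stieglitz--Tietz survey of matrix transformations and gives no proof of its own, so you have supplied the missing argument rather than an alternative to one. Your proof is the standard one for the characterization of the matrix class $(c_0,c_0)$, and every step checks out: sufficiency via the splitting $(Ax)_n=\sum_{k<N}a_{nk}x_k+\sum_{k\ge N}a_{nk}x_k$, where condition (ii) kills the finitely many head terms as $n\to\infty$ and condition (i) bounds the tail by $\varepsilon M$; necessity via the basis vectors $e^{(k)}$ (yielding (ii)) and the finitely supported unimodular test sequences $x^{(N)}$ (yielding (i) together with $\sup_n\sum_k|a_{nk}|\le\|T\|$); and the two inequalities $\|T\|\le M$ and $M\le\|T\|$ combine to give the norm identity. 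One small virtue of your necessity argument worth noting: since your test vectors are finitely supported, you never need to invoke convergence of $\sum_k a_{nk}x_k$ for arbitrary $x\in c_0$, so the argument avoids any hidden appeal to uniform boundedness or gliding-hump reasoning.
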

\par
Throughout the paper, we denote the set of natural numbers by $\mathbb{N}$, the set of complex numbers by $\mathbb{C}$ and $\mathbb{N}_0 = \mathbb{N} \cup \{0\}$. We assume that $x_{-n} = 0$ for all $n\in \mathbb{N}$.

\section{Main results}
 Let $l$ and $l'$ be two natural numbers. Suppose that $L$ is the least common multiple of $l$ and $l'$. Then the matrix $\mathbb{B}(r_1,\dots , r_l; s_1, \dots, s_{l'})$ is defined as $\mathbb{B} = (b_{ij})_{i,j\geq 0}$, where
\begin{equation}
b_{ij} = \begin{cases} r_{j(\text{mod} \ l)+1} & \text{when} \  i=j,\\
 s_{j(\text{mod} \ l')+1} & \text{when} \  i=j+1,\\
 0  & \text{otherwise}.\end{cases}
\end{equation}
That is,
\begin{align*}
B&=
\left[
\begin{array}{ccccc}
r_1  & 0 & 0 & 0 & \cdots\\
s_1 & \ddots & 0 & 0 & \cdots\\
0&s_2 & r_l & 0 & \cdots\\
0&0 & \ddots & \ddots & \cdots\\
0& 0& 0&  s_{l'}& \ddots\\
\vdots&\vdots &\vdots &\vdots & \ddots\\
\end{array}
\right].
\end{align*}
If the matrix $\mathbb{B}$ transforms a sequence $x=(x_k)$ into another sequence $y = (y_k)$, then
\begin{align}
y_k &= \sum\limits_{j=0}^{\infty}b_{kj}x_j \nonumber\\
       & = b_{k,k-1}x_{k-1} + b_{k,k}x_k \nonumber \\
&= s_{(k-1)(\text{mod} \ l')+1}x_{k-1} + r_{k(\text {mod} \ l)+1}x_k.
\end{align}
\begin{corollary}
The matrix $\mathbb{B}: c_0 \rightarrow c_0$ is a bounded linear operator and $\lVert B\rVert_{(c_0 : c_0)} \leq \max\limits_{i,j}\{\lvert r_i \rvert + \lvert s_j \lvert; 1\leq i \leq l, 1 \leq j \leq l'\}$.
\end{corollary}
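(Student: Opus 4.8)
The plan is to apply Lemma \ref{lemma_b(c0;c0)} directly, since it characterizes exactly when an infinite matrix induces a bounded operator on $c_0$ and simultaneously identifies the operator norm. The band structure of $\mathbb{B}$ makes both hypotheses of that lemma almost immediate: $\mathbb{B}$ is a lower triangular matrix supported on only two diagonals, so each of its rows and each of its columns carries at most two nonzero entries.

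First I would verify condition (i) of the lemma. By the displayed action of $\mathbb{B}$, the $k$-th row has nonzero entries only in columns $k-1$ and $k$, namely $b_{k,k-1} = s_{(k-1)(\text{mod} \ l')+1}$ and $b_{k,k} = r_{k(\text{mod} \ l)+1}$, where the convention $x_{-n}=0$ ensures that the $0$-th row carries only its diagonal entry. Hence each row has finite support and therefore lies in $l_1$, with $l_1$ norm equal to $\lvert s_{(k-1)(\text{mod} \ l')+1}\rvert + \lvert r_{k(\text{mod} \ l)+1}\rvert$. Because the sequences $r$ and $s$ assume only the finitely many values $r_1,\dots,r_l$ and $s_1,\dots,s_{l'}$, every such row norm is bounded above by $\max_{i,j}\{\lvert r_i\rvert + \lvert s_j\rvert\}$, so the supremum of the $l_1$ norms of the rows is finite and dominated by that quantity.

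Next I would check condition (ii). For a fixed column index $j$, the $j$-th column has nonzero entries only in rows $j$ and $j+1$, so it too has finite support and hence converges to $0$, i.e.\ belongs to $c_0$. With both conditions of Lemma \ref{lemma_b(c0;c0)} verified, I conclude that $\mathbb{B}\in B(c_0)$ and that $\lVert \mathbb{B}\rVert_{(c_0:c_0)}$ equals the supremum of the $l_1$ norms of its rows, which by the previous step is at most $\max_{i,j}\{\lvert r_i\rvert + \lvert s_j\rvert\}$.

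The argument is entirely routine; the only point demanding a moment of care is the periodic indexing. One should observe that as $k$ ranges over $\mathbb{N}_0$ the pair of residues $\bigl(k \ (\text{mod} \ l),\, (k-1) \ (\text{mod} \ l')\bigr)$ need not realize every admissible combination $(i,j)$, which is precisely why the conclusion is stated as an inequality rather than an equality. Nevertheless $\max_{i,j}\{\lvert r_i\rvert + \lvert s_j\rvert\}$ is a legitimate upper bound for each individual row norm, and hence for their supremum, which is all that the stated bound requires.
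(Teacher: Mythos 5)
Your proof is correct and is exactly the intended argument: the paper states this as an immediate corollary of Lemma \ref{lemma_b(c0;c0)}, and your verification of its two conditions (rows of finite support with $l_1$ norms uniformly bounded by $\max_{i,j}\{\lvert r_i\rvert + \lvert s_j\rvert\}$, columns of finite support hence in $c_0$) together with the lemma's norm formula is precisely what that deduction requires. Your closing remark about why the bound is an inequality rather than an equality is a nice touch not spelled out in the paper.
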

\par
Suppose that $a$ is an integer and $n$ is a natural number. Then, by $a(\text{mod} \ n)$, we mean the least positive integer $x$ such that $n$ divides $a-x$. We denote the set of all such nonnegative integers $x$ such that $n$ divides $a-x$ by $[a_n]$.
Let $\sigma$ and $\sigma'$ be two mappings defined on the set of integers such that
  \begin{equation*}
  \sigma(k)= k(\text{mod} \ l)+1
  \end{equation*}
  and
  \begin{equation*}
   \sigma'(k)= k(\text{mod} \ l')+1
  \end{equation*}
  respectively. Without loss of generality, we assume that $s_{\sigma'(k)} s_{\sigma'(k+1)} \cdots s_{\sigma'{(k+j)}} = 1$ and $(r_{\sigma(k)} - \lambda)(r_{\sigma(k+1)}-\lambda) \cdots (r_{\sigma(k+j)}-\lambda) = 1$ when $k+j < k$. If $\lambda$ is a complex number such that $(\mathbb{B} - \lambda I)^{-1}$ exists, then the entries of the matrix $(\mathbb{B} - \lambda I)^{-1}=(z_{nk})$ are given by
  \begin{align}\label{inverse}
   z_{nk}
  &=\begin{cases}\frac{(-1)^{n-k} s_{\sigma'(k)} \cdots s_{\sigma'(k+\zeta''-1)}}{(r_{\sigma(k)}-\lambda)\cdots (r_{\sigma(k+\zeta')}-\lambda)}\frac{(s_1 \dots s_{l'})^{m''}}{\{ (r_1 - \lambda)\cdots (r_l - \lambda)\}^{m'}}\left\{ \frac{(s_1 \cdots s_{l'})^{\frac{L}{l'}}}{ \{(r_1 - \lambda)\cdots (r_l - \lambda)\}^{\frac{L}{l}}}\right\}^m &  \text{when} \ n\geq k \\
  \frac{1}{r_k - \lambda} & \text{when} \ n = k \\
   0 & \text{otherwise}, \end{cases}
  \end{align}
  where $\zeta, \zeta'$ and $\zeta''$ are the least nonnegative integers such that
  \begin{align*}
  n-k &= m L  + \zeta\\
  \zeta&=  m' l+ \zeta' \\
  \zeta  &= m''l' + \zeta''
  \end{align*}
  for some nonnegative integers $m, m'$ and $m''$.

\begin{lemma}\label{lemma_inverse}
If $\lambda$ is a complex number such that  $(\vert \lambda - r_1\vert \cdots \vert \lambda - r_l\vert)^{{1}/{l}} > (\vert s_1 \vert \cdots \vert s_{l'}\vert)^{{1}/{l'}}$, then $(\mathbb{
B}-\lambda I)^{-1} \in B(c_0)$.
\end{lemma}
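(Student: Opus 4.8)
\noindent The plan is to apply Lemma~\ref{lemma_b(c0;c0)} directly to the explicit inverse $(z_{nk})$ given in \eqref{inverse}: I would verify that its rows have uniformly bounded $l_1$ norms and that each of its columns lies in $c_0$. Everything reduces to extracting a single geometric decay factor from the formula.

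First I would record that the hypothesis $(\vert\lambda-r_1\vert\cdots\vert\lambda-r_l\vert)^{1/l} > (\vert s_1\vert\cdots\vert s_{l'}\vert)^{1/l'}$ forces the left-hand side to be strictly positive, hence $\lambda\neq r_i$ for every $i$, so that all denominators in \eqref{inverse} are nonzero and the formula is well defined. Raising the inequality to the $L$-th power and setting
\[
q=\frac{(\vert s_1\vert\cdots\vert s_{l'}\vert)^{L/l'}}{(\vert\lambda-r_1\vert\cdots\vert\lambda-r_l\vert)^{L/l}},
\]
the hypothesis is seen to be exactly the statement $q<1$. This $q$ is precisely the modulus of the bracketed quantity appearing raised to the power $m$ in \eqref{inverse}.

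The key step, and the one I expect to require the most care, is the pointwise estimate $\vert z_{nk}\vert\leq M\,q^{m}$ for $n\geq k$, where $m=\lfloor (n-k)/L\rfloor$ and $M$ is a constant independent of $n$ and $k$. Here the bookkeeping over the nested divisions $n-k=mL+\zeta$, $\zeta=m'l+\zeta'$ and $\zeta=m''l'+\zeta''$ is essential: since $\zeta<L$ and $\zeta',\zeta''$ are remainders modulo $l$ and $l'$, the integers $m',m''$ are bounded (by $L/l$ and $L/l'$) and the products $s_{\sigma'(k)}\cdots s_{\sigma'(k+\zeta''-1)}$ and $(r_{\sigma(k)}-\lambda)\cdots(r_{\sigma(k+\zeta')}-\lambda)$ involve only finitely many of the fixed data $r_1,\dots,r_l,s_1,\dots,s_{l'},\lambda$, the latter product being bounded away from zero. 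Hence the product of the two prefactors in \eqref{inverse} is dominated by one constant $M$, while the bracketed term contributes exactly $q^{m}$. The subtle point is to confirm that no factor other than the bracketed one grows with $n-k$, so that the entire geometric growth is captured by $q^{m}$.

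Once this estimate is available, both requirements of Lemma~\ref{lemma_b(c0;c0)} follow at once. For the rows, substituting $j=n-k$ and grouping the terms into blocks of length $L$ gives
\[
\sum_{k=0}^{n}\vert z_{nk}\vert\leq M\sum_{j=0}^{\infty}q^{\lfloor j/L\rfloor}=ML\sum_{m=0}^{\infty}q^{m}=\frac{ML}{1-q},
\]
which is finite and independent of $n$, establishing condition (i). For the columns, fixing $k$ and letting $n\to\infty$ forces $m\to\infty$, whence $\vert z_{nk}\vert\leq Mq^{m}\to0$, so every column belongs to $c_0$ and condition (ii) holds. By Lemma~\ref{lemma_b(c0;c0)} we conclude that $(\mathbb{B}-\lambda I)^{-1}\in B(c_0)$.
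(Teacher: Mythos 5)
Your proof is correct and follows essentially the same route as the paper: both verify the two conditions of Lemma~\ref{lemma_b(c0;c0)} for the explicit inverse \eqref{inverse}, with the hypothesis entering only through the geometric ratio $q<1$. The only difference is bookkeeping: the paper sums the row entries residue class by residue class modulo $L$ (obtaining, for each class, a bounded prefactor times a geometric series), whereas you compress this into the single uniform pointwise bound $\vert z_{nk}\vert\leq M\,q^{\lfloor (n-k)/L\rfloor}$ and then sum once — a cleaner packaging of the same estimate.
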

\begin{proof}
Since $s_1, s_2, \dots, s_{l'}$ are nonzero and $(\vert \lambda - r_1\vert \cdots \vert \lambda - r_l\vert)^{{1}/{l}} > (\vert s_1 \vert \cdots \vert s_{l'}\vert)^{{1}/{l'}}$, therefore $\lambda \neq r_1$, $\lambda \neq r_2, \dots ,\lambda \neq r_l$. Consequently, the matrix $\mathbb{B}-\lambda I$ is a triangle and hence $(\mathbb{B}-\lambda I)^{-1} =(z_{nk})$ exists which is given by \eqref{inverse}.
   We first consider an arbitrary row of $(\mathbb{B}-\lambda I)^{-1}$ that is a multiple of $L$, that is $n=  \widetilde{m}L$ for some $\widetilde{m}\in \mathbb{N}_0$. Now, let $k=  \hat{m}L$ for $\hat{m} = 0, 1, \dots, \widetilde{m}$. Then $n-k = (\widetilde{m}- \hat{m})L$ and $\zeta = \zeta' = \zeta''=0$. Thus, from \eqref{inverse}, we have
  \begin{equation*}
  z_{nk} = \frac{(-1)^{n-k}}{r_{\sigma(k)}-\lambda}\left\{ \frac{(s_1 \dots s_{l'})^{\frac{L}{l'}}}{\{(r_1 - \lambda)\dots (r_l - \lambda)\}^{\frac{L}{l}}}\right\}^{\widetilde{m}-\hat{m}}
  \end{equation*}
  for all $\hat{m} = 0, 1, \dots, \widetilde{m}$. Therefore,
  \begin{equation*}
  \sum\limits_{k\in [0_{L}]}\vert z_{nk}\vert = \frac{1}{\vert r_{\sigma(k)}-\lambda\vert}\sum\limits_{j=0}^{\tilde{m}}\left\{ \frac{(\vert s_1\vert \cdots \vert s_{l'}\vert)^{\frac{L}{l'}}}{\{\vert r_1 - \lambda \vert \cdots \vert r_l - \lambda \vert\}^{\frac{L}{l}}}\right\}^j,
  \end{equation*}
  where $[0_L]$ denotes the set of all nonnegative integers which are multiple of $L$. For the same row, if we consider $k=\hat{m}L+1$ for $\hat{m}= 0,1, \dots \tilde{m}-1$, then $n-k=(\widetilde{m}-\hat{m}-1)L+L-1$. Let $m_1$ and $\widetilde{m}_1$ be quotients and $\zeta'_1$ and $\zeta''_1$ be remainders when $L-1$ is divided by $l$ and $l'$ respectively, that is
  \begin{align*}
  L-1 &= m_1l + \zeta'_1\\
  L-1 &=  \widetilde{m}_1 l'+ \zeta''_1.
  \end{align*}
  Then, from \eqref{inverse}, we get
 \begin{align*}
 z_{nk} &= \frac{(-1)^{n-k} s_{\sigma'(k)} \cdots s_{\sigma'(k+\zeta''_1-1)}}{(r_{\sigma(k)}-\lambda)\cdots (r_{\sigma(k+\zeta'_1)}-\lambda)}\frac{(s_1 \cdots s_{l'})^{\widetilde{m}_1}}{\{ (r_1 - \lambda)\cdots (r_l - \lambda)\}^{m_1}}
\left\{ \frac{(s_1 \cdots s_{l'})^{\frac{L}{l'}}}{ \{(r_1 - \lambda)\cdots (r_l - \lambda)\}^{\frac{L}{l}}}\right\}^{ \widetilde{m}-\hat{m}-1}
 \end{align*}
 for all $\hat{m}= 0, 1, \dots, \widetilde{m}-1$. Hence,
 \begin{align*}
 \sum\limits_{k\in [1_L]}\vert z_{nk} \vert &=  \frac{\vert s_{\sigma'(k)}\vert \cdots \vert s_{\sigma'(k+\zeta''_1-1)} \vert}{\vert r_{\sigma(k)}-\lambda \vert \cdots  \vert r_{\sigma(k+\zeta'_1)}-\lambda \vert }  \frac{(\vert s_1 \vert \cdots \vert s_{l'} \vert )^{\widetilde{m}_1}}{\{ \vert r_1 - \lambda\vert \cdots \vert r_l - \lambda \vert \}^{m_1}}\nonumber\\
  & \qquad \times\sum\limits_{j=0}^{\widetilde{m}-1} \left\{ \frac{(\vert s_1 \vert \cdots \vert s_{l'} \vert )^{\frac{L}{l'}}}{ \{\vert r_1 - \lambda\vert \cdots \vert r_l - \lambda \vert\}^{\frac{L}{l}}}\right\}^j,
 \end{align*}
 where $[1_L]$ denotes the set of all nonnegative integers $x$ such that $L$ divides $1-x$. Similarly, for $k=\hat{m}L+2, \dots ,\hat{m}L+L-1$, we get
  \begin{align*}
 \sum\limits_{k\in [2_L]}\vert z_{nk} \vert &=  \frac{\vert s_{\sigma'(k)}\vert \cdots \vert s_{\sigma'(k+\zeta''_2-1)} \vert}{\vert r_{\sigma(k)}-\lambda \vert \cdots  \vert r_{\sigma(k+\zeta'_2)}-\lambda \vert }  \frac{(\vert s_1 \vert \dots \vert s_{l'} \vert )^{\widetilde{m}_2}}{\{ \vert r_1 - \lambda\vert \cdots \vert r_l - \lambda \vert \}^{m_2}}  \nonumber \\
  &\qquad \times\sum\limits_{j=0}^{\widetilde{m}-1} \left\{ \frac{(\vert s_1 \vert \cdots \vert s_{l'} \vert )^{\frac{L}{l'}}}{ \{\vert r_1 - \lambda\vert \cdots \vert r_l - \lambda \vert\}^{\frac{L}{l}}}\right\}^j,
 \end{align*}
 $$\vdots$$
  \begin{align*}
 \sum\limits_{k\in [(L-1)_L]}\vert z_{nk} \vert &=  \frac{\vert s_{\sigma'(k)}\vert \cdots \vert s_{\sigma'(k+\zeta''_{L-1}-1)} \vert}{\vert r_{\sigma(k)}-\lambda \vert \cdots  \vert r_{\sigma(k+\zeta'_{L-1})}-\lambda \vert }  \frac{(\vert s_1 \vert \cdots \vert s_{l'} \vert )^{\widetilde{m}_{L-1}}}{\{ \vert r_1 - \lambda\vert \cdots \vert r_l - \lambda \vert \}^{{m}_{L-1}}}\nonumber\\
   &\qquad \times \sum\limits_{j=0}^{\widetilde{m}-1} \left\{ \frac{(\vert s_1 \vert \cdots \vert s_{l'} \vert )^{\frac{L}{l'}}}{ \{\vert r_1 - \lambda\vert \cdots \vert r_l - \lambda \vert\}^{\frac{L}{l}}}\right\}^j.
 \end{align*}
 Thus,
 \begin{align}\label{eq_inverse}
 \sum\limits_{k=0}^{\infty} \vert z_{nk}\vert &= \frac{1}{\vert r_{\sigma(k)}-\lambda\vert}\sum\limits_{j=0}^{\widetilde{m}}\left\{ \frac{(\vert s_1\vert \cdots \vert s_{l'}\vert)^{\frac{L}{l'}}}{\{\vert r_1 - \lambda \vert \cdots \vert r_l - \lambda \vert\}^{\frac{L}{l}}}\right\}^j
 + M \sum\limits_{j=0}^{\widetilde{m}-1} \left\{ \frac{(\vert s_1 \vert \cdots \vert s_{l'} \vert )^{\frac{L}{l'}}}{ \{\vert r_1 - \lambda\vert \cdots \vert r_l - \lambda \vert\}^{\frac{L}{l}}}\right\}^j,
 \end{align}
 where
 \begin{align*}
 M =&  \frac{\vert s_{\sigma'(k)}\vert \cdots \vert s_{\sigma'(k+\zeta''_1-1)} \vert}{\vert r_{\sigma(k)}-\lambda \vert \cdots  \vert r_{\sigma(k+\zeta'_1)}-\lambda \vert }  \frac{(\vert s_1 \vert \cdots \vert s_{l'} \vert )^{\widetilde{m}_1}}{\{ \vert r_1 - \lambda\vert \cdots \vert r_l - \lambda \vert \}^{m_1}} \\
 &+ \dots +\frac{\vert s_{\sigma'(k)}\vert \cdots \vert s_{\sigma'(k+\zeta''_{L-1}-1)} \vert}{\vert r_{\sigma(k)}-\lambda \vert \cdots  \vert r_{\sigma(k+\zeta'_{L-1})}-\lambda \vert }  \frac{(\vert s_1 \vert \cdots \vert s_{l'} \vert )^{\widetilde{m}_{L-1}}}{\{ \vert r_1 - \lambda\vert \cdots \vert r_l - \lambda \vert \}^{{m}_{L-1}}}.
 \end{align*}

 Let $M_0 = \max\{\frac{1}{\vert r_{\sigma(k)}-\lambda\vert}, M\}$, then
 \begin{equation*}
 \sum\limits_{k=0}^{\infty} \vert z_{nk}\vert \leq  \frac{2 M_0(\vert r_1 - \lambda \vert \vert r_2 - \lambda \vert \cdots \vert r_l - \lambda \vert)^{\frac{L}{l}}}{(\vert r_1 \vert \vert r_2\vert \dots \vert r_l \vert)^{\frac{L}{l}} -  (\vert s_1\vert \vert s_2\vert \cdots \vert s_{l'}\vert)^{\frac{L}{l'}}}.
 \end{equation*}
 Therefore, $\sup_{n\in [0_L]} \sum_{k=0}^{\infty} \vert z_{nk}\vert < \infty$. Similarly, we prove that $\sup_{n\in [1_L]} \sum_{k=0}^{\infty} \vert z_{nk}\vert < \infty$, $\sup_{n\in [2_L]} \sum_{k=0}^{\infty} \vert z_{nk}\vert < \infty$, $\dots$, $ \sup_{n\in [(L-1)_L]} \sum_{k=0}^{\infty} \vert z_{nk}\vert < \infty$. Thus,
 \begin{equation*}
 \sup\limits_n \sum_{k=0}^{\infty} \vert z_{nk}\vert = \max \left \{ \sup_{n\in [0_L]} \sum_{k=0}^{\infty} \vert z_{nk}\vert, \sup_{n\in [1_L]} \sum_{k=0}^{\infty} \vert z_{nk}\vert, \dots, \sup_{n\in [(L-1)_L]} \sum_{k=0}^{\infty} \vert z_{nk}\vert\right\}.
 \end{equation*}
 This implies that $\sup_n \sum_{k=0}^{\infty} \vert z_{nk}\vert < \infty$. Like wise, for an arbitrary column of $(\mathbb{B}-\lambda I)^{-1}$, adding the entries separately whose rows $n$ belongs to $[0_L], [1_L], \dots, [(L-1)_L]$ respectively, we get $\sum_{n=0}^{\infty} \vert z_{nk}\vert<\infty$. Therefore, $\lim_{n\rightarrow \infty} \vert z_{nk} \vert $ $=0$ for all $k\in \mathbb{N}_0$. Hence, by Lemma \ref{lemma_b(c0;c0)}, the matrix $(\mathbb{B}-\lambda I)^{-1} \in B(c_0)$.
\end{proof}

Consider the set $S= \left\{\lambda \in \mathbb{C}: (\vert \lambda - r_1\vert \cdots \vert \lambda - r_l\vert)^{\frac{1}{l}} \leq (\vert s_1 \vert \cdots \vert s_{l'}\vert)^{\frac{1}{l'}}\right\}$. Then, we have the following theorem:
\begin{theorem}\label{th_spectrum}
 $ \sigma(\mathbb{B}, c_0) = S$.
\end{theorem}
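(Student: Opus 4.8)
The plan is to prove the two inclusions $S\subseteq\sigma(\mathbb{B},c_0)$ and $\sigma(\mathbb{B},c_0)\subseteq S$ separately. The second inclusion is immediate from Lemma \ref{lemma_inverse}: if $\lambda\notin S$ then $(\vert\lambda-r_1\vert\cdots\vert\lambda-r_l\vert)^{1/l} > (\vert s_1\vert\cdots\vert s_{l'}\vert)^{1/l'}$, so $(\mathbb{B}-\lambda I)^{-1}$ exists as a bounded operator on all of $c_0$; hence $\lambda$ is a regular value, and therefore $\mathbb{C}\setminus S\subseteq\rho(\mathbb{B},c_0)$, i.e. $\sigma(\mathbb{B},c_0)\subseteq S$. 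All the work goes into the reverse inclusion.

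For $S\subseteq\sigma(\mathbb{B},c_0)$ I would first treat the interior $S^{\circ}=\{\lambda:(\vert\lambda-r_1\vert\cdots\vert\lambda-r_l\vert)^{1/l} < (\vert s_1\vert\cdots\vert s_{l'}\vert)^{1/l'}\}$ by exhibiting a nonzero eigenvector of the adjoint. Working in $c_0^{*}=l_1$, the adjoint $\mathbb{B}^{*}$ is the transpose (upper triangular) matrix, so the equation $(\mathbb{B}^{*}-\lambda I)f=0$ reads $(r_{\sigma(k)}-\lambda)f_k+s_{\sigma'(k)}f_{k+1}=0$, which solves recursively as $f_{k+1}=-\frac{r_{\sigma(k)}-\lambda}{s_{\sigma'(k)}}f_k$; taking $f_0=1$ gives $\vert f_k\vert=\prod_{j=0}^{k-1}\frac{\vert r_{\sigma(j)}-\lambda\vert}{\vert s_{\sigma'(j)}\vert}$, which is legitimate because every $s_j$ is nonzero. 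Over one block of length $L=\mathrm{lcm}(l,l')$ the index $\sigma(j)$ runs through $1,\dots,l$ exactly $L/l$ times and $\sigma'(j)$ through $1,\dots,l'$ exactly $L/l'$ times, so the product over a block equals the fixed ratio $R=\frac{(\vert r_1-\lambda\vert\cdots\vert r_l-\lambda\vert)^{L/l}}{(\vert s_1\vert\cdots\vert s_{l'}\vert)^{L/l'}}$. For $\lambda\in S^{\circ}$ we have $R<1$, so $\vert f_k\vert$ decays geometrically (up to a bounded correction from the position of $k$ within its block) and $f=(f_k)\in l_1$. Thus $N(\mathbb{B}^{*}-\lambda I)\neq\{0\}$, so by the range--kernel duality $\overline{R(\mathbb{B}-\lambda I)}={}^{\perp}N(\mathbb{B}^{*}-\lambda I)\neq c_0$; that is, condition (R3) fails and $\lambda\in\sigma(\mathbb{B},c_0)$. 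Hence $S^{\circ}\subseteq\sigma(\mathbb{B},c_0)$.

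Finally I would upgrade this to all of $S$ using that $\sigma(\mathbb{B},c_0)$ is closed. Writing $p(\lambda)=(\lambda-r_1)\cdots(\lambda-r_l)$ and $c=(\vert s_1\vert\cdots\vert s_{l'}\vert)^{l/l'}>0$, we have $S=\{\vert p(\lambda)\vert\le c\}$ and $S^{\circ}=\{\vert p(\lambda)\vert<c\}$. A boundary point $\lambda_0$ satisfies $\vert p(\lambda_0)\vert=c>0$, so $p(\lambda_0)\neq0$; by the minimum modulus (open mapping) principle for the nonconstant holomorphic $p$, every neighbourhood of $\lambda_0$ contains points with $\vert p\vert<c$, whence $\lambda_0\in\overline{S^{\circ}}$. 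Thus $S=\overline{S^{\circ}}\subseteq\overline{\sigma(\mathbb{B},c_0)}=\sigma(\mathbb{B},c_0)$, and combining with the first inclusion gives $\sigma(\mathbb{B},c_0)=S$.

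I expect the main obstacle to be the interior step: making the block-wise geometric estimate for $\vert f_k\vert$ fully rigorous (controlling the bounded fluctuation within a period and deducing summability uniformly), together with the clean invocation of $\overline{R(T)}={}^{\perp}N(T^{*})$ to convert the adjoint eigenvector into failure of density of the range. By contrast, the passage from $S^{\circ}$ to the boundary is comparatively soft, requiring only closedness of the spectrum and the fact that the boundary avoids the zeros of $p$ (guaranteed by $s_j\neq0$, so $c>0$).
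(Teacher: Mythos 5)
Your proposal is correct, but the hard inclusion $S\subseteq\sigma(\mathbb{B},c_0)$ is proved by a genuinely different route than the paper's. The paper handles all of $S$ in one stroke: for $\lambda\in S$ with $\lambda\neq r_1,\dots,r_l$ the matrix $\mathbb{B}-\lambda I$ is a triangle, so its formal inverse $(z_{nk})$ from \eqref{inverse} exists, and applying it to $y=(1,0,0,\dots)\in c_0$ gives a sequence whose subsequence $x_{nL}=\frac{1}{r_1-\lambda}\bigl\{(s_1\cdots s_{l'})^{L/l'}/((r_1-\lambda)\cdots(r_l-\lambda))^{L/l}\bigr\}^{n}$ has modulus bounded below (since the ratio has modulus $\geq 1$ exactly when $\lambda\in S$), so $y$ has no preimage in $c_0$ and $\lambda\in\sigma(\mathbb{B},c_0)$; the points $\lambda=r_i$ are dispatched separately by non-density of the range. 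You instead split $S$ into the open part and the boundary: on the open part you construct an $l_1$ eigenvector of $\mathbb{B}^{*}$ (which is precisely what the paper later proves as Theorem \ref{th_pointspectrumofadjoint}) and invoke $\overline{R(T_\lambda)}={}^{\perp}N(T_\lambda^{*})$ to kill density of the range, and you then capture the boundary circle-like set $\{\vert p\vert=c\}$ by closedness of the spectrum together with the open mapping principle for the polynomial $p$. Both arguments are sound; your block-wise estimate $\vert f_{mL+i}\vert\leq CR^{m}$ with $R<1$ is easily made rigorous since a block has only $L$ positions, and the case $\lambda=r_i$ causes no trouble in your recursion (the eigenvector just terminates). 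The trade-off: the paper's computation is uniform over $S$ and needs nothing beyond the inverse formula it has already established, whereas your decomposition avoids that formula for this inclusion, yields strictly more information on the open part (non-density of range, i.e.\ membership in the residual spectrum, anticipating Theorems \ref{th_pointspectrumofadjoint} and \ref{th_residualspectrum}), and settles the boundary by soft topological facts rather than computation.
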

\begin{proof}
 First, we prove that $ \sigma(\mathbb{B}, c_0) \subseteq S$. Let $\lambda$ be a complex number that does not belong to $S$. Then, $(\vert \lambda - r_1\vert \cdots \vert \lambda - r_l\vert)^{{1}/{l}} > (\vert s_1 \vert \cdots \vert s_{l'}\vert)^{{1}/{l'}}$.
 Therefore from Lemma \ref{lemma_inverse}, we have $(\mathbb{B}-\lambda I)^{-1} \in B(c_0)$. That is, $\lambda \notin \sigma (\mathbb{B}, c_0)$. Hence, $\sigma (\mathbb{B}, c_0) \subseteq A$. \par
 Next, we show that $S \subseteq \sigma (\mathbb{B}, c_0)$. Let $\lambda \in S$. Then, $(\vert \lambda - r_1 \vert \cdots \vert \lambda- r_l \vert)^{1/l}\leq(\vert s_1 \vert \cdots \vert s_{l'} \vert)^{1/l'}$. If $\lambda$ equals any of the $r_i$ for $i=1, \dots, l$, then the range of the operator is not dense and thus $\lambda \in \sigma (\mathbb{B}, c_0)$. Therefore, suppose $\lambda \neq r_1, \dots, \lambda \neq r_l$. Then, $\mathbb{B}-\lambda I$ is a triangle and $(\mathbb{B}- \lambda I)^ {-1} =(z_{nk})$ exists which is given by \eqref{inverse}. Let $y = (1, 0, 0, \dots) \in c_0$ and let $x = (x_k)$ be the sequence such that $(\mathbb{B}- \lambda I)^ {-1} y = x$. Then, from \eqref{inverse}, we get
 \begin{equation}
 x_{nL} = z_{nL,0}= \frac{1}{r_1 - \lambda}\left\{\frac{(s_1s_2 \dots s_{l'})^{\frac{L}{l'}}}{\{ (r_1-\lambda)\dots (r_l -\lambda)\}^{\frac{L}{l}}} \right\}^{n}.
 \end{equation}
 Since $\{ (r_1 - \lambda ) \dots (r_l - \lambda)\}^ {1/l} \leq (s_1 \dots s_{l'})^{\frac{1}{l'}}$, the subsequence $(x_{nL})$ of $x$ does not converge to $0$. Consequently, the sequence $x=(x_k)\notin c_0$. Thus, $\lambda \in \sigma (\mathbb{B}, c_0)$ and therefore $S \subseteq \sigma (\mathbb{B}, c_0)$. This proves the theorem.
\end{proof}

\begin{theorem}\label{th_pointspectrum}
$\sigma_p(\mathbb{B},c_0)=\phi$.
\end{theorem}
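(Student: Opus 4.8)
The plan is to establish the stronger, purely algebraic fact that for every $\lambda\in\mathbb{C}$ the homogeneous equation $(\mathbb{B}-\lambda I)x=0$ forces $x=0$; in particular no $\lambda$ can admit an eigenvector in $c_0$, giving $\sigma_p(\mathbb{B},c_0)=\phi$. Using the formula for $(\mathbb{B}x)_k$ obtained above and the convention $x_{-1}=0$, this equation is equivalent to the scalar relations
\[
 s_{\sigma'(k-1)}\,x_{k-1}+\bigl(r_{\sigma(k)}-\lambda\bigr)x_k=0,\qquad k\ge 0 .
\]
I would analyse this recurrence by running it forward from $k=0$, making repeated use of the hypothesis that each $s_i$ is nonzero.

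First I would treat the generic case $\lambda\notin\{r_1,\dots,r_l\}$. Then every coefficient $r_{\sigma(k)}-\lambda$ is nonzero, so the relation at $k=0$ (where $x_{-1}=0$) gives $x_0=0$, and a straightforward induction dividing by $r_{\sigma(k)}-\lambda$ at each step yields $x_k=0$ for all $k$.

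The main obstacle is the case $\lambda=r_j$ for one or more indices $j$, because at any position $k$ with $r_{\sigma(k)}=\lambda$ the coefficient of $x_k$ vanishes and the recurrence fails to propagate. To handle it I would introduce the position set $D=\{k\ge 0:\ r_{\sigma(k)}=\lambda\}$. Since the diagonal sequence $(r_{\sigma(k)})$ is $l$-periodic and takes the value $\lambda=r_j$, the set $D$ is nonempty and infinite. At each $k\in D$ the relation degenerates to $s_{\sigma'(k-1)}x_{k-1}=0$, forcing $x_{k-1}=0$ because $s_{\sigma'(k-1)}\neq 0$, while at each $k\notin D$ it reads $x_k=\dfrac{s_{\sigma'(k-1)}}{\lambda-r_{\sigma(k)}}\,x_{k-1}$ with a nonzero multiplier.

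To finish, I would fix an arbitrary index $m$ and let $d$ be the least element of $D$ exceeding $m$, which exists as $D$ is infinite. The relation at $k=d$ gives $x_{d-1}=0$; since none of $m+1,\dots,d-1$ lies in $D$, I can run the recurrence backward through these indices, each step dividing by a nonzero multiplier, to obtain $x_{d-2}=\dots=x_m=0$. As $m$ is arbitrary, $x=0$, so the homogeneous system has only the trivial solution and $\sigma_p(\mathbb{B},c_0)=\phi$. The one delicate point I expect is the bookkeeping showing that every value left free at a zero of the diagonal is annihilated by the constraint coming from the next element of $D$; the nonvanishing of the $s_i$ together with the periodicity of $(r_{\sigma(k)})$ is precisely what guarantees this.
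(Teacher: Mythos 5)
Your proof is correct and rests on essentially the same mechanism as the paper's: at any index where the diagonal coefficient $r_{\sigma(k)}-\lambda$ vanishes, the relation there forces the preceding coordinate to be zero, and backward substitution through the nonzero subdiagonal entries $s_{\sigma'(k-1)}$ annihilates all earlier coordinates. The paper packages this as a contradiction starting from the first nonzero term $x_{k_0}$ (which forces $\lambda=r_{\sigma(k_0)}$, so that $k_0+l$ plays exactly the role of your index $d\in D$), whereas you argue injectivity of $\mathbb{B}-\lambda I$ directly for every $\lambda$ with a case split; the difference is purely organizational.
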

\begin{proof}
Let $\lambda \in \sigma_p(\mathbb{B},c_0)$. Then there exists a nonzero sequence $x=(x_k)$ such that $\mathbb{B}x=\lambda x$. This implies that
\begin{equation}\label{eq_bx=lambdax}
s_{(k-1)(\text{mod} \ l')+1}x_{k-1}+r_{k(\text{mod}\ l)+1}x_k = \lambda x_k.
\end{equation}
Let $x_{k_0}$ be the first non-zero term of the sequence $x=(x_k)$. Then from the relation \eqref{eq_bx=lambdax}, we get $\lambda = r_{k_0(\text{mod} \ l)+1}$. Using the relation \eqref{eq_bx=lambdax} for $k= k_0 + l$, we have
\begin{align}\label{eq_point2}
s_{(k_0 + l-1)(\text{mod} \ l')+1}x_{k_0 + l-1}+r_{(k_0 + l)(\text{mod}\ l)+1}x_{k_0 + l} &= \lambda x_{k_0 + l}.\nonumber\\
\text{That is,}\ \ \ s_{(k_0 + l-1)(\text{mod} \ l')+1}x_{k_0 + l-1}+r_{k_0 (\text{mod}\ l)+1}x_{k_0 + l}& = \lambda x_{k_0 + l}.
\end{align}
Putting $\lambda = r_{k_0(\text{mod} \ l)+1}$ in the Equation \eqref{eq_point2}, we get $s_{(k_0 +l-1)(\text{mod} \ l')+1}x_{k_0 +l-1}=0$. As $s_{(k_0 +l-1)(\text{mod} \ l')+1}\neq 0$, therefore $x_{k_0 +l-1}=0$. Similarly, using the relation \eqref{eq_bx=lambdax} for $k=k_0 + l-1$ and putting the value $x_{k_0 +l-1}=0$, we obtain $x_{k_0 +l-2}=0$. Repeating the same steps for $k=k_0 + l-2, k_0 + l-3, \dots ,k_0 + 1$, we get $x_{k_0}=0$, which is a contradiction. Hence, $\sigma_p(\mathbb{B},c_0)=\phi$.
\end{proof}
Let $B^*$ denote the adjoint of the operator $B$. Then the matrix representation of $B^*$ is equal to the transpose of the matrix $B$, which is given follows:
\begin{align*}
B^*&=
\left[
\begin{array}{ccccccc}
r_1  & s_1 & 0 & 0 & 0& \cdots\\
0 & \ddots & s_2& 0 & 0 &\cdots\\
0&0  & r_l &\ddots & 0 & \cdots\\
0& 0& 0& \ddots & s_{l'}& \cdots\\
\vdots &\vdots &\vdots &\vdots &\ddots & \ddots
\end{array}
\right].
\end{align*}
The next theorem gives the point spectrum of the operator $B^*$.
\begin{theorem}\label{th_pointspectrumofadjoint}
$\sigma_p(\mathbb{B}^*, c_0^*) = \left\{\lambda \in \mathbb{C}: (\vert \lambda - r_1\vert \dots \vert \lambda - r_l\vert)^{\frac{1}{l}} < (\vert s_1 \vert \dots \vert s_{l'}\vert)^{\frac{1}{l'}}\right\}$.
\end{theorem}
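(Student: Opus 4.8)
The plan is to use the identification $c_0^* = l_1$ and to reduce the eigenvalue problem for $\mathbb{B}^*$ to a first-order recurrence. Since $\mathbb{B}^*$ is the transpose of $\mathbb{B}$, its action on a sequence $x=(x_k)$ is $(\mathbb{B}^* x)_k = r_{\sigma(k)}x_k + s_{\sigma'(k)}x_{k+1}$, so $\lambda \in \sigma_p(\mathbb{B}^*, c_0^*)$ exactly when there is a nonzero $x\in l_1$ satisfying
\begin{equation*}
s_{\sigma'(k)}\,x_{k+1} = (\lambda - r_{\sigma(k)})\,x_k, \qquad k\ge 0.
\end{equation*}
As each $s_{\sigma'(k)}\neq 0$, this recurrence determines $x$ from $x_0$ alone; if $x_0=0$ then $x\equiv 0$, so any eigenvector has $x_0\neq 0$ and
\begin{equation*}
x_k = x_0 \prod_{j=0}^{k-1}\frac{\lambda - r_{\sigma(j)}}{s_{\sigma'(j)}}.
\end{equation*}
The whole theorem then reduces to deciding when this explicit sequence lies in $l_1$.

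First I would evaluate the factor accumulated over one full period. Over any block of $L$ consecutive indices $j$, the value $r_{\sigma(j)}$ runs through $r_1,\dots,r_l$ exactly $L/l$ times and $s_{\sigma'(j)}$ runs through $s_1,\dots,s_{l'}$ exactly $L/l'$ times, so the modulus of the per-period factor is
\begin{equation*}
\rho = \frac{(\vert \lambda - r_1\vert \cdots \vert \lambda - r_l\vert)^{L/l}}{(\vert s_1\vert \cdots \vert s_{l'}\vert)^{L/l'}},
\end{equation*}
giving $\vert x_{k+L}\vert = \rho\,\vert x_k\vert$ for every $k$. Taking $L$-th roots shows that $\rho<1$ is equivalent to the defining inequality $(\vert \lambda - r_1\vert \cdots \vert \lambda - r_l\vert)^{1/l} < (\vert s_1\vert \cdots \vert s_{l'}\vert)^{1/l'}$.

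For the inclusion $\supseteq$, I take $\lambda$ in the stated set. If $\lambda=r_i$ for some $i$, a factor of the product vanishes and the eigenvector is finitely supported, hence trivially in $l_1$; otherwise all $x_k\neq 0$, and splitting $\sum_k \vert x_k\vert$ into its $L$ residue classes modulo $L$ exhibits each class as a geometric series of ratio $\rho<1$, so the total sum is finite and $\lambda$ is an eigenvalue. For the reverse inclusion I argue contrapositively: if $\lambda$ lies outside the set then $\lambda\neq r_1,\dots,r_l$ and $\rho\ge 1$, so along each residue class modulo $L$ the moduli $\vert x_k\vert$ are nondecreasing; since $x_0\neq 0$ they do not tend to $0$, which rules out $x\in l_1$ and shows $\lambda$ is not an eigenvalue.

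I expect the delicate point to be the boundary case $\rho=1$, i.e.\ equality in the defining inequality. There the candidate eigenvector has moduli that are bounded and periodic along each residue class but do not decay, so it fails to be summable even when bounded; this is precisely what forces the \emph{strict} inequality in the statement and separates $\sigma_p(\mathbb{B}^*, c_0^*)$ from the full spectrum $S$ of Theorem \ref{th_spectrum}. A secondary point to verify carefully is that the recurrence admits no freedom beyond the single scalar $x_0$, so that the entire eigenvalue analysis collapses to the convergence of one explicit series.
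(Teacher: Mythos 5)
Your proof is correct and follows essentially the same route as the paper: solve the eigenvalue recurrence $s_{\sigma'(k)}x_{k+1}=(\lambda-r_{\sigma(k)})x_k$ explicitly in terms of $x_0$, group the terms of $\sum_k\vert x_k\vert$ by residue class modulo $L$, and reduce membership in $l_1$ to convergence of a geometric series with the per-period ratio $\rho$. The only difference is that you spell out the edge cases ($\lambda=r_i$ giving a finitely supported eigenvector, and the boundary case $\rho=1$) that the paper compresses into its final ``Clearly, $x\in l_1$ if and only if\dots'' step.
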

\begin{proof}
Let $\lambda \in \sigma_p(\mathbb{B}^*, c^*_0 \cong l_1)$. Then there exists a nonzero sequence $x=(x_k) \in l_1$ such that $\mathbb{B}^*x = \lambda x$. From this relation, the subsequences $(x_{kL}), (x_{kL+1}), \dots, (x_{kL+L-1})$ of $x=(x_k)$ are given by
\begin{align*}
x_{kL}=& \left\{ \frac{((\lambda - r_1) \dots (\lambda - r_l))^{\frac{L}{l}}}{(s_1 \dots s_{l'})^{\frac{L}{l'}}}\right\}^k x_0\\
x_{kL+1}=& \frac{(\lambda - r_1)}{s_1}\left\{ \frac{((\lambda - r_1) \dots (\lambda - r_l))^{\frac{L}{l}}}{(s_1 \dots s_{l'})^{\frac{L}{l'}}}\right\}^k x_0\\
\vdots&\\
x_{kL+L-1}=&\frac{(\lambda - r_1)^{\frac{L}{l}} \dots (\lambda - r_{l-1})^{\frac{L}{l}}(\lambda -r_l)^{\frac{L}{l}-1}}{s_1^{\frac{L}{l'}}\dots s_{l' -1}^{\frac{L}{l'}}s_{l'}^{\frac{L}{l'}-1}}\left\{ \frac{((\lambda - r_1) \dots (\lambda - r_l))^{\frac{L}{l}}}{(s_1 \dots s_{l'})^{\frac{L}{l'}}}\right\}^k x_0.
\end{align*}
Thus,
\begin{align*}
\sum\limits_{k=0}^{\infty}\lvert x_k \rvert &= \sum\limits_{k=0}^{\infty} \lvert x_{kL}\rvert + \sum\limits_{k=0}^{\infty} \lvert x_{kL+1}\rvert + \dots + \sum\limits_{n=0}^{\infty} \lvert x_{kL+L-1}\rvert\\
 &= \left( 1+  \left \lvert \frac{\lambda - r_1}{s_1}\right\rvert + \cdots +\left\lvert  \frac{(\lambda - r_1)^{\frac{L}{l}} \cdots (\lambda - r_{l-1})^{\frac{L}{l}}(\lambda -r_l)^{\frac{L}{l}-1}}{s_1^{\frac{L}{l'}}\cdots s_{l' -1}^{\frac{L}{l'}}s_{l'}^{\frac{L}{l'}-1}}\right\rvert \right)\\
& \qquad \times \sum\limits_{k=0}^{\infty}\left\lvert \frac{((\lambda - r_1) \cdots (\lambda - r_l))^{\frac{L}{l}}}{(s_1 \cdots s_{l'})^{\frac{L}{l'}}}   \right\rvert^k \lvert x_0 \rvert.
\end{align*}
Clearly, the sequence $x=(x_k)\in l_1$ if and only if $(\vert \lambda - r_1\vert \cdots \vert \lambda - r_l\vert)^{\frac{1}{l}} < (\vert s_1 \vert \cdots \vert s_{l'}\vert)^{\frac{1}{l'}}$. This proves the theorem.
\end{proof}

\begin{theorem}\label{th_residualspectrum}
$\sigma_r(\mathbb{B}, c_0) = \left\{\lambda \in \mathbb{C}: (\vert \lambda - r_1\vert \cdots \vert \lambda - r_l\vert)^{\frac{1}{l}} < (\vert s_1 \vert \cdots \vert s_{l'}\vert)^{\frac{1}{l'}}\right\}$.
\end{theorem}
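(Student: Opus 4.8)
The plan is to identify the residual spectrum of $\mathbb{B}$ with the point spectrum of its adjoint, so that the two preceding computations do all the work. Recall from Theorem \ref{th_pointspectrum} that $\sigma_p(\mathbb{B}, c_0) = \phi$; hence $\mathbb{B} - \lambda I$ is injective for every $\lambda \in \mathbb{C}$, so condition (R1) is satisfied for all $\lambda$ and, by the Goldberg classification in Table \ref{table1}, a spectral value $\lambda$ belongs to $\sigma_r(\mathbb{B}, c_0)$ exactly when condition (R3) fails, i.e. when the range $R(\mathbb{B} - \lambda I)$ is not dense in $c_0$. Thus the entire question reduces to deciding, for each $\lambda$, whether $\overline{R(\mathbb{B} - \lambda I)} = c_0$.

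First I would pass to the adjoint. By the Hahn--Banach theorem, $\overline{R(\mathbb{B} - \lambda I)} \neq c_0$ if and only if some nonzero functional $f \in c_0^*$ vanishes on $R(\mathbb{B} - \lambda I)$, that is $f((\mathbb{B} - \lambda I)x) = 0$ for all $x \in c_0$. Since the adjoint satisfies $((\mathbb{B}^* - \lambda I)f)(x) = f((\mathbb{B} - \lambda I)x)$, this says precisely that $(\mathbb{B}^* - \lambda I)f = 0$ for a nonzero $f$, i.e. $\lambda \in \sigma_p(\mathbb{B}^*, c_0^*)$. Consequently $\sigma_r(\mathbb{B}, c_0) = \sigma_p(\mathbb{B}^*, c_0^*) \setminus \sigma_p(\mathbb{B}, c_0)$, and the subtracted set being empty gives $\sigma_r(\mathbb{B}, c_0) = \sigma_p(\mathbb{B}^*, c_0^*)$.

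It then remains only to invoke Theorem \ref{th_pointspectrumofadjoint}, which already identifies $\sigma_p(\mathbb{B}^*, c_0^*)$ with the set $\{\lambda \in \mathbb{C} : (\vert \lambda - r_1\vert \cdots \vert \lambda - r_l\vert)^{1/l} < (\vert s_1 \vert \cdots \vert s_{l'}\vert)^{1/l'}\}$, yielding the claimed equality. As a sanity check one notes that this set sits inside $S = \sigma(\mathbb{B}, c_0)$ of Theorem \ref{th_spectrum}, since a strict inequality forces the weak one; so every $\lambda$ obtained is a genuine spectral value, as it must be.

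I do not anticipate a real obstacle, since the hard analysis is already contained in Theorems \ref{th_pointspectrum} and \ref{th_pointspectrumofadjoint}. The one step needing care is the duality identification of a non-dense range with non-injectivity of the adjoint; this can be justified either by the Hahn--Banach argument above or, equivalently, via Theorem \ref{th_adjointontobounded}, and it relies on the identification $c_0^* \cong l_1$ used throughout, under which the annihilating functional $f$ is exactly the $l_1$ eigenvector of $\mathbb{B}^*$ exhibited in the proof of Theorem \ref{th_pointspectrumofadjoint}.
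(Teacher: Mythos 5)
Your proof is correct and follows essentially the same route as the paper: both reduce the claim to the identity $\sigma_r(\mathbb{B}, c_0) = \sigma_p(\mathbb{B}^*, c_0^*) \setminus \sigma_p(\mathbb{B}, c_0)$ and then invoke Theorems \ref{th_pointspectrum} and \ref{th_pointspectrumofadjoint}. The only difference is that the paper cites this identity as a known fact, whereas you supply its standard Hahn--Banach justification, which is a harmless (indeed welcome) addition.
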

\begin{proof}
The residual spectrum of a bounded linear operator $T$ on a Banach space $X$ is given by the relation $\sigma_r(T, X) = \sigma_p(T^*, X^*) \setminus \sigma_p(T, X)$. Thus,
 using Theorems \ref{th_pointspectrum} and \ref{th_pointspectrumofadjoint}, we get the result.
\end{proof}

\begin{theorem}\label{th_continuousspectrum}
$\sigma_c(\mathbb{B}, c_0) = \left\{\lambda \in \mathbb{C}: (\vert \lambda - r_1\vert \cdots \vert \lambda - r_l\vert)^{\frac{1}{l}} = (\vert s_1 \vert \cdots \vert s_{l'}\vert)^{\frac{1}{l'}}\right\}$.
\end{theorem}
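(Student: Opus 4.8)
The plan is to avoid any new operator analysis and instead read off $\sigma_c(\mathbb{B},c_0)$ from the disjoint decomposition of the spectrum recorded in Section 2. By definition the point, continuous and residual spectra are pairwise disjoint and their union is $\sigma(\mathbb{B},c_0)$; hence $\sigma_c(\mathbb{B},c_0) = \sigma(\mathbb{B},c_0)\setminus\bigl(\sigma_p(\mathbb{B},c_0)\cup\sigma_r(\mathbb{B},c_0)\bigr)$. All three sets on the right-hand side have already been determined, so the proof reduces to a set-theoretic subtraction.

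Concretely, I would substitute the three known descriptions. Theorem \ref{th_spectrum} gives $\sigma(\mathbb{B},c_0)=S$, the closed region cut out by $(\vert\lambda-r_1\vert\cdots\vert\lambda-r_l\vert)^{1/l}\le(\vert s_1\vert\cdots\vert s_{l'}\vert)^{1/l'}$; Theorem \ref{th_pointspectrum} gives $\sigma_p(\mathbb{B},c_0)=\phi$; and Theorem \ref{th_residualspectrum} gives $\sigma_r(\mathbb{B},c_0)$ equal to the open region where the same inequality is strict. Removing the empty set and the strict-inequality region from the closed region $S$ leaves exactly the locus of equality, which is precisely the asserted set.

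The only point to verify is the internal consistency of this bookkeeping, namely that the strict-inequality set and the equality set genuinely partition $S$. This is immediate: for each $\lambda\in S$ the two real numbers $(\vert\lambda-r_1\vert\cdots\vert\lambda-r_l\vert)^{1/l}$ and $(\vert s_1\vert\cdots\vert s_{l'}\vert)^{1/l'}$ satisfy exactly one of the relations $<$ or $=$, so the two subsets are disjoint and cover $S$. There is no genuine obstacle at this stage; the substantive effort was already spent in Lemma \ref{lemma_inverse} (the growth estimate guaranteeing boundedness of the inverse off $S$) and in Theorem \ref{th_pointspectrumofadjoint} (the point spectrum of the adjoint), which together supply the three inputs used here. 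If desired, one could independently corroborate the conclusion through Goldberg's Table \ref{table1} by checking that every boundary $\lambda$ lands in state $\mathrm{II}_2$ (range dense but proper, inverse unbounded), but once the disjoint partition of the spectrum is invoked this verification is superfluous.
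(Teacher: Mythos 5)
Your proposal is correct and follows exactly the paper's own argument: the paper likewise obtains $\sigma_c(\mathbb{B},c_0)$ by subtracting $\sigma_p(\mathbb{B},c_0)=\phi$ (Theorem \ref{th_pointspectrum}) and the strict-inequality set $\sigma_r(\mathbb{B},c_0)$ (Theorem \ref{th_residualspectrum}) from $\sigma(\mathbb{B},c_0)=S$ (Theorem \ref{th_spectrum}), using the disjointness of the three parts of the spectrum. Your extra check that the strict-inequality and equality loci partition $S$ is a harmless (and correct) addition that the paper leaves implicit.
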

\begin{proof}
Since spectrum of an operator on a Banach space is disjoint union of point, residual and continuous spectrum, therefore from Theorem \ref{th_pointspectrum}, \ref{th_spectrum} and \ref{th_residualspectrum}, we get $\sigma_c(\mathbb{B}, c_0) = \left\{\lambda \in \mathbb{C}: (\vert \lambda - r_1\vert \cdots \vert \lambda - r_l\vert)^{\frac{1}{l}} = (\vert s_1 \vert \cdots \vert s_{l'}\vert)^{\frac{1}{l'}}\right\}$.
\end{proof}

\begin{theorem}
$\{r_1, r_2, \dots, r_l\}\subseteq III_1\sigma(\mathbb{B}, c_0)$.
\end{theorem}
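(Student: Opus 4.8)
The plan is to place each $r_m$ ($1\le m\le l$) in the cell $III_1$ of Table \ref{table1}, i.e.\ to verify simultaneously that $(\mathbb{B}-r_mI)^{-1}$ exists and is bounded (condition $1$) while $\overline{R(\mathbb{B}-r_mI)}\neq c_0$ (condition III). Half of this comes essentially for free from the residual spectrum: since every $s_j\neq 0$ we have $(\lvert r_m-r_1\rvert\cdots\lvert r_m-r_l\rvert)^{1/l}=0<(\lvert s_1\rvert\cdots\lvert s_{l'}\rvert)^{1/l'}$, so Theorem \ref{th_residualspectrum} gives $r_m\in\sigma_r(\mathbb{B},c_0)$. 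By the definition of the residual spectrum this already supplies condition III together with the existence of $(\mathbb{B}-r_mI)^{-1}$ (consistently with $\sigma_p(\mathbb{B},c_0)=\phi$ from Theorem \ref{th_pointspectrum}); thus $r_m$ lies in $III_1\cup III_2$ and the whole task reduces to showing that the inverse is bounded.

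To separate $III_1$ from $III_2$ I would invoke Theorem \ref{th_adjointontobounded}: $\mathbb{B}-r_mI$ has a bounded inverse precisely when its adjoint $\mathbb{B}^*-r_mI\colon l_1\to l_1$ is onto. So the real content is to prove that $\mathbb{B}^*-r_mI$ is surjective. Given $y=(y_k)\in l_1$, solving $(\mathbb{B}^*-r_mI)x=y$ amounts to the scalar relations $(r_{\sigma(k)}-r_m)x_k+s_{\sigma'(k)}x_{k+1}=y_k$, which I would read as the forward recursion $x_{k+1}=\bigl(y_k-(r_{\sigma(k)}-r_m)x_k\bigr)/s_{\sigma'(k)}$ started from $x_0=0$. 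The initial finite segment $x_0,\dots,x_{m-1}$ is then determined by $y_0,\dots,y_{m-2}$ and is trivially summable.

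The decisive structural feature is that the diagonal coefficient $r_{\sigma(k)}-r_m$ vanishes exactly at the indices $k\equiv m-1\pmod l$. At each such \emph{reset} index $k^{\ast}$ the recursion collapses to $x_{k^{\ast}+1}=y_{k^{\ast}}/s_{\sigma'(k^{\ast})}$, independent of $x_{k^{\ast}}$, so the solution splits into consecutive blocks of length $l$ in which $x_{k^{\ast}+1},\dots,x_{k^{\ast}+l}$ are fixed linear combinations of the block's data $y_{k^{\ast}},\dots,y_{k^{\ast}+l-1}$. Because $(r_k)$ and $(s_k)$ are periodic and $\min_j\lvert s_j\rvert>0$, the coefficients of these combinations---products of at most $l$ factors of the form $(r_{\sigma(\cdot)}-r_m)/s_{\sigma'(\cdot)}$ or $1/s_{\sigma'(\cdot)}$---are bounded by a single constant $C$ independent of the block. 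Consequently every $y_j$ influences at most $l$ of the $x_k$, which yields $\sum_k\lvert x_k\rvert\le C'\sum_j\lvert y_j\rvert<\infty$; hence $x\in l_1$ and $\mathbb{B}^*-r_mI$ is onto. By Theorem \ref{th_adjointontobounded} the inverse $(\mathbb{B}-r_mI)^{-1}$ is bounded, placing $r_m$ in $III_1$.

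I expect the one genuinely delicate step to be this uniform $l_1$-estimate for the recursion: a naive attempt to control the solution would run into an unbounded product of the coefficients $(r_{\sigma(k)}-r_m)/s_{\sigma'(k)}$, and the whole argument hinges on recognizing that the periodic vanishing of the diagonal resets that product and localizes the inverse into finite blocks, so that boundedness follows from periodicity alone.
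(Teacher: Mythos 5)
Your proposal is correct and follows essentially the same route as the paper: reduce to $III_1\cup III_2$ via $r_m\in\sigma_r(\mathbb{B},c_0)$ (Theorem \ref{th_residualspectrum}), then apply Theorem \ref{th_adjointontobounded} and show $(\mathbb{B}-r_mI)^*$ is onto by solving the adjoint recursion, whose diagonal coefficient vanishes periodically so that the solution decomposes into finite blocks with uniformly bounded coefficients, giving the $l_1$ estimate. The only cosmetic difference is that you take blocks of length $l$ between resets and invoke periodicity for the uniform bound, while the paper works with blocks of length $L$ so its coefficients $k_{ij}$ are literal constants.
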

\begin{proof}
 The range of the operator $\mathbb{B}- r_1 I$ is not dense in $c_0$, therefore $r_1 \in III\sigma(\mathbb{B}, c_0)$. Since $\lambda = r_1$, the inequality $ (\vert \lambda - r_1\vert \cdots \vert \lambda - r_l\vert)^{\frac{1}{l}} < (\vert s_1 \vert \cdots \vert s_{l'}\vert)^{\frac{1}{l'}}$ holds. Hence, from Theorem \ref{th_residualspectrum}, we get $r_1 \in \sigma_r(\mathbb{B}, c_0)$. However, $\sigma_r (\mathbb{B}, c_0) = III_1\sigma(\mathbb{B}, c_0) \cup III_2\sigma(\mathbb{B}, c_0)$. Therefore, $r_1$ belongs to either $III_1\sigma(\mathbb{B}, c_0)$ or $ III_2\sigma(\mathbb{B}, c_0)$. To prove $r_1 \in III_1\sigma(\mathbb{B},c_0)$, we show that the matrix $\mathbb{B}-r_1 I$ has bounded inverse and from Theorem \ref{th_adjointontobounded}, it will be sufficient to show that $(\mathbb{B}-\lambda I)^ *$ is onto. For this, let $y=(y_k)\in l_1$. Then $(\mathbb{B}-\lambda I)^* x = y$ implies
 \begin{equation*}
 x_{mL+1}=k_{11}y_{mL},
 \end{equation*}
  where $k_{11}=s_1^{-1}$,
  \begin{equation*}
  x_{mL+2}=k_{21}y_{mL+1}+k_{22}y_{mL},
  \end{equation*}
  where $k_{21}= (s_{1(\text{mod} \ l')+1})^{-1}$ and $k_{22}= - (r_{1(\text{mod} \ l)+1}-r_1)(s_1s_{1(\text{mod} \ l')+1})^{-1}$. Similarly, we get
  \begin{align*}
  x_{mL+3}&= k_{31}y_{mL+2} + k_{32}y_{mL+1}\\
  &\vdots\\
  x_{mL+L-1}&= k_{L-1,1}y_{mL+L-2}+k_{L-1,2}y_{mL+l-2}\\
  x_{mL}&= k_{L1}y_{mL+L-1}+k_{L2}y_{mL+L-2}.
  \end{align*}
Hence,
\allowdisplaybreaks
\begin{align*}
\sum\limits_{n=0}^{\infty}\lvert x_n \rvert  &= \lvert x_0 \rvert + \sum\limits_{m=1}^{\infty}\lvert x_{mL}\rvert + \sum\limits_{m=0}^{\infty}\lvert x_{mL+L-1}\rvert + \dots +\sum\limits_{m=0}^{\infty}\lvert x_{mL+1}\rvert \\
& \leq \lvert x_0 \rvert + \vert k_{L1}\vert\sum\limits_{m=0}^{\infty}\vert y_{mL + L-1}\vert + (\vert k_{L2}\vert+\vert k_{L-1,1}\vert)\sum\limits_{m=0}^{\infty}y_{mL + L-2}\\
&\qquad+ \dots + (\vert k_{32}\vert + \vert k_{21}\vert)\sum\limits_{m=0}^{\infty}\vert y_{mL+1}\vert + (\vert k_{22}\vert+\vert k_{11}\vert)\sum\limits_{m=0}^{\infty}\vert y_{mL}\vert.
\end{align*}
Since the sequence $y=(y_k)\in l_1$, therefore from the above inequality, we have $x=(x_k)\in l_1$. Thus, $(\mathbb{B}-r_1 I)^*$ is onto and therefore by theorem \ref{th_adjointontobounded} the operator $(\mathbb{B}-r_1 I)$ has bounded inverse. Hence, $r_1 \in  III_1\sigma(\mathbb{B}(r_1,r_2; s_1, s_2, s_3), c_0)$. Analogously, we prove that $r_2, \dots, r_l \in III_1\sigma(\mathbb{B}(r_1,r_2; s_1, s_2, s_3), c_0)$. This proves the theorem.
\end{proof}

\begin{theorem}
 $\sigma_r (\mathbb{B}, c_0)\setminus\{r_1, r_2 \dots r_l\}\subseteq III_2\sigma(\mathbb{B}, c_0)$.
\end{theorem}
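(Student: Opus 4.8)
The plan is to show that no $\lambda\in\sigma_r(\mathbb{B},c_0)\setminus\{r_1,\dots,r_l\}$ lies in $III_1\sigma(\mathbb{B},c_0)$; since $\sigma_r(\mathbb{B},c_0)=III_1\sigma(\mathbb{B},c_0)\cup III_2\sigma(\mathbb{B},c_0)$, every such $\lambda$ must then lie in $III_2\sigma(\mathbb{B},c_0)$. Fix such a $\lambda$. Because $\lambda\neq r_i$ for every $i$, the matrix $\mathbb{B}-\lambda I$ is a triangle, hence injective, and its inverse $(z_{nk})$ is given by the formal expression \eqref{inverse}. By Theorem \ref{th_residualspectrum} we know $\lambda\in\sigma_r(\mathbb{B},c_0)$, so the only thing left to exclude is the bounded-inverse alternative; by Theorem \ref{th_adjointontobounded} this is the same as proving that $(\mathbb{B}-\lambda I)^{*}$ is not onto, equivalently that $(\mathbb{B}-\lambda I)^{-1}$ is unbounded.

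I would run the argument in direct parallel with the proof that $r_1,\dots,r_l\in III_1\sigma(\mathbb{B},c_0)$, but driving it toward the opposite conclusion. Writing $(\mathbb{B}-\lambda I)^{*}x=y$ componentwise yields the upper-bidiagonal system $(r_{i(\text{mod}\ l)+1}-\lambda)x_i+s_{i(\text{mod}\ l')+1}x_{i+1}=y_i$, which, since each $s_j\neq 0$, is solved by a forward recursion expressing $x_{i+1}$ in terms of $x_i$ and $y_i$ and leaving $x_0$ as the sole free parameter. Over one block of length $L$ the homogeneous part is scaled by $\frac{(\vert\lambda-r_1\vert\cdots\vert\lambda-r_l\vert)^{L/l}}{(\vert s_1\vert\cdots\vert s_{l'}\vert)^{L/l'}}$, which is strictly less than $1$ on $\sigma_r(\mathbb{B},c_0)$. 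The aim is then to produce a single $y\in l_1$ (say the spike $y=(1,0,0,\dots)$ or a suitably sparse geometric sequence) for which no value of $x_0$ renders the resulting $x$ summable; this gives non-surjectivity of $(\mathbb{B}-\lambda I)^{*}$ and hence $\lambda\in III_2\sigma(\mathbb{B},c_0)$. A second, purely $c_0$-side route is to apply Lemma \ref{lemma_b(c0;c0)}: on $\sigma_r(\mathbb{B},c_0)$ the per-block factor appearing in \eqref{inverse} has modulus exceeding $1$, so along any fixed column $k$ the entries $z_{nk}$ grow with $n$ and in particular fail to tend to $0$, whence $(z_{nk})$ violates the column condition of Lemma \ref{lemma_b(c0;c0)} and cannot represent a bounded operator on $c_0$.

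The step I expect to be the main obstacle is converting the failure of $(z_{nk})$ to satisfy the hypotheses of Lemma \ref{lemma_b(c0;c0)} into a rigorous proof that $(\mathbb{B}-\lambda I)^{-1}$ is unbounded on its actual range $R(\mathbb{B}-\lambda I)$ --- equivalently, that $(\mathbb{B}-\lambda I)^{*}$ is not onto. These are genuinely different assertions, since a matrix may fail the criteria of Lemma \ref{lemma_b(c0;c0)} yet still restrict to a bounded map on the non-dense range. The delicate feature is that the strict inequality defining $\sigma_r(\mathbb{B},c_0)$ makes the homogeneous solution of the adjoint system summable, so the solution set is a one-parameter family sitting over a particular solution, and one must choose the forcing $y$ so that the contracting forward recursion nevertheless pushes every member of that family out of $l_1$. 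Carrying out the block-by-block estimates as in \eqref{eq_inverse}, but now arranged to force divergence rather than convergence of $\sum_k\vert x_k\vert$, is the heart of the matter and the place where I would check the borderline behavior most carefully.
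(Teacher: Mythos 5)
Your proposal stalls exactly at the step you yourself flag as ``the heart of the matter,'' and that step cannot be completed, because the inclusion you are trying to prove is false. On $\sigma_r(\mathbb{B},c_0)$ the per-period ratio $\theta=\bigl(\vert\lambda-r_1\vert\cdots\vert\lambda-r_l\vert\bigr)^{L/l}\big/\bigl(\vert s_1\vert\cdots\vert s_{l'}\vert\bigr)^{L/l'}$ is strictly less than $1$, so in the adjoint system $(r_{\sigma(i)}-\lambda)x_i+s_{\sigma'(i)}x_{i+1}=y_i$ the transition products obey $\bigl\vert\prod_{j=i+1}^{k-1}(r_{\sigma(j)}-\lambda)/s_{\sigma'(j)}\bigr\vert\le C\rho^{\,k-1-i}$ with $\rho=\theta^{1/L}<1$ and $C$ depending only on the data (the coefficients are $L$-periodic). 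The general solution is $x_k=P_kx_0+\sum_{i<k}Q_{ik}\,y_i/s_{\sigma'(i)}$ with $\vert P_k\vert\le C\rho^k$ and $\vert Q_{ik}\vert\le C\rho^{\,k-1-i}$, whence $\sum_k\vert x_k\vert\le C\vert x_0\vert\sum_k\rho^k+\tfrac{C}{(1-\rho)\min_j\vert s_j\vert}\sum_i\vert y_i\vert<\infty$. So for \emph{every} $y\in l_1$ and \emph{every} choice of $x_0$ the solution lies in $l_1$: the sparse or spiked forcing you hope to construct does not exist, $(\mathbb{B}-\lambda I)^{*}$ is onto, and by Theorem \ref{th_adjointontobounded} the operator $\mathbb{B}-\lambda I$ has a bounded inverse on its (non-dense) range, i.e.\ $\lambda\in III_1\sigma(\mathbb{B},c_0)$, not $III_2$. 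The simplest witness is $l=l'=1$, which the statement covers: then $\mathbb{B}(r;s)=rI+sR$ with $R$ the right shift, an isometry of $c_0$, so $\lVert(\mathbb{B}-\lambda I)x\rVert_\infty\ge(\vert s\vert-\vert\lambda-r\vert)\lVert x\rVert_\infty$, and every point of $\{0<\vert\lambda-r\vert<\vert s\vert\}$ lies in $III_1$.

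It is worth stressing that your cautionary remark --- that failing the criteria of Lemma \ref{lemma_b(c0;c0)} is not the same as being unbounded on the actual range $R(\mathbb{B}-\lambda I)$ --- is precisely the fallacy on which the paper's own proof rests: the paper notes that the row sums $\sum_k\vert z_{nk}\vert$ in \eqref{eq_inverse} diverge as $n\to\infty$ and concludes that $\mathbb{B}-\lambda I$ ``has no bounded inverse.'' Divergent row (or column) sums of the formal inverse \eqref{inverse} only show that $(z_{nk})$ does not define a bounded operator on all of $c_0$; the vectors of $R(\mathbb{B}-\lambda I)$ are exactly those on which the growing entries cancel, and on that subspace the inverse is bounded, as the adjoint computation above shows. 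So your critical instinct was correct, but the right conclusion to draw from it is not that the estimate needs more care --- it is that neither your plan nor the paper's argument can work, and in fact $III_2\sigma(\mathbb{B},c_0)=\emptyset$ while all of $\sigma_r(\mathbb{B},c_0)$ belongs to $III_1\sigma(\mathbb{B},c_0)$.
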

\begin{proof}
Let $\lambda \in \sigma_r (\mathbb{B}, c_0)\setminus\{r_1, r_2 \dots r_l\}$. Then $\lambda \notin r_1, \dots, \lambda \notin r_l$ and table \ref{table1} suggests that $\lambda$ belongs to either $III_1\sigma(\mathbb{B}, c_0)$ or $III_2 \sigma(\mathbb{B}, c_0)$. To prove $ \lambda \in III_2 \sigma(\mathbb{B},c_0)$, it is enough to show that the operator $(\mathbb{B}- \lambda I)$ does not have bounded inverse. Since $\lambda \in \sigma_r(\mathbb{B}, c_0)$, the inequality $(\vert \lambda - r_1\vert \dots \vert \lambda - r_l\vert)^{\frac{1}{l}} < (\vert s_1 \vert \dots \vert s_{l'}\vert)^{\frac{1}{l'}}$ holds. Therefore, the series $\sum_{k=0}^{\infty}\vert z_{nk}\vert$ in \eqref{eq_inverse} is not convergent as $n$ goes to infinity and hence the operator $\mathbb{B}-\lambda I$ has no bounded inverse. Thus, $\lambda \in III_2\sigma(\mathbb{B},c_0)$. This proves the theorem.
\end{proof}
\section{Spectra and fine spectra of the matrix $\mathbb{B}(r_1, \dots, r_4; s_1, \dots, s_6)$}
We consider the matrix
\begin{align*}
&\mathbb{B}(r_1, \dots, r_4; s_1, \dots, s_6)\\
&=
  \left[{\begin{array}{cccccccccccccc}
    r_1 & 0 & 0 & 0 & 0&0 &0&0&0&0&0&0&0& \dots\\
    s_1 & r_2 & 0 & 0 & 0&0&0&0&0&0&0&0&0&\dots\\
    0 & s_2 & r_3 & 0 & 0&0&0&0&0&0&0&0&0&\dots\\
    0 & 0 & s_3 & r_4 & 0&0&0&0&0&0&0&0&0&\dots\\
    0 & 0& 0 & s_4& r_1&0&0&0&0&0&0&0&0&\dots\\
    0 & 0 & 0 & 0 & s_5&r_2&0&0&0&0&0&0&0&\dots\\
    0 & 0 & 0 & 0 & 0&s_6&r_3&0&0&0&0&0&0&\dots\\
    0 & 0 & 0 & 0 & 0&0&s_1&r_4&0&0&0&0&0&\dots\\
    0 & 0 & 0 & 0 & 0&0&0&s_2&r_1&0&0&0&0&\dots\\
    0 & 0 & 0 & 0 & 0&0&0&0&s_3&r_2&0&0&0&\dots\\
    0 & 0 & 0 & 0 & 0&0&0&0&0&s_4&r_3&0&0&\dots\\
    0 & 0 & 0 & 0 & 0&0&0&0&0&0&s_5&r_4&0&\dots\\
    0 & 0 & 0 & 0 & 0&0&0&0&0&0&0&s_6&r_1&\dots\\
     \vdots & \vdots & \vdots & \vdots & \vdots&\vdots&\vdots&\vdots&\vdots&\vdots&\vdots&\vdots&\vdots&\ddots
     \end{array}}\right].
\end{align*}
Now, consider the following sets:
\begin{align*}
S&=\left\{ \lambda \in \mathbb{C}: (\vert \lambda - r_1\vert \vert \lambda - r_2\vert \vert \lambda-r_3\vert \vert \lambda - r_4\vert)^{\frac{1}{4}}
\leq (\vert s_1 \vert \vert s_2\vert \vert s_3\vert \vert s_4\vert \vert s_5\vert \vert s_{6}\vert)^{\frac{1}{6}}\right\},\\
S_1&=\left\{ \lambda \in \mathbb{C}: (\vert \lambda - r_1\vert \vert \lambda - r_2\vert \vert \lambda-r_3\vert \vert \lambda - r_4\vert)^{\frac{1}{4}}
 < (\vert s_1 \vert \vert s_2\vert \vert s_3\vert \vert s_4\vert \vert s_5\vert \vert s_{6}\vert)^{\frac{1}{6}}\right\},\\
S_2&=\left\{ \lambda \in \mathbb{C}: (\vert \lambda - r_1\vert \vert \lambda - r_2\vert \vert \lambda-r_3\vert \vert \lambda - r_4\vert)^{\frac{1}{4}}
= (\vert s_1 \vert \vert s_2\vert \vert s_3\vert \vert s_4\vert \vert s_5\vert \vert s_{6}\vert)^{\frac{1}{6}}\right\}.
\end{align*}
From the discussion of the previous section, we get the following results:
\begin{corollary}
The operator $\mathbb{B}(r_1, \dots, r_4; s_1, \dots, s_6): c_0 \rightarrow c_0$ is a bounded linear operator and $\lVert \mathbb{B}(r_1, \dots, r_4; s_1, \dots, s_6)\rVert_{(c_0:c_0)} \leq \max\limits_{i,j}\{\lvert r_i \rvert + \lvert s_j \lvert; 1\leq i \leq 4, 1 \leq j \leq 6\}$.
\end{corollary}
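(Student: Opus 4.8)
The plan is to recognize this corollary as the specialization to $l=4$, $l'=6$ of the general Corollary stated immediately after the definition of $\mathbb{B}$, so that the very same argument applies. First I would invoke Lemma \ref{lemma_b(c0;c0)}, which characterizes membership in $B(c_0)$ by two conditions: every row lies in $l_1$ with uniformly bounded $l_1$-norms, and every column lies in $c_0$. Crucially, that lemma also identifies the operator norm as the supremum of the $l_1$-norms of the rows, so once I verify the two conditions I get the norm estimate for free.

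Next I would read off the two conditions directly from the displayed matrix. Each row of $\mathbb{B}(r_1,\dots,r_4;s_1,\dots,s_6)$ contains at most two nonzero entries: one diagonal entry drawn from $\{r_1,r_2,r_3,r_4\}$ and at most one subdiagonal entry drawn from $\{s_1,\dots,s_6\}$ (the very first row carrying only the diagonal entry $r_1$). Hence the $l_1$-norm of any row equals $|r_i|+|s_j|$ for suitable indices $i,j$, or merely $|r_1|$ in the first row. Taking the supremum over all rows therefore gives the bound $\max_{i,j}\{|r_i|+|s_j|\}$, which is finite. Likewise each column has at most two nonzero entries and is in particular eventually zero, so every column belongs to $c_0$; condition (ii) holds trivially.

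Combining these observations, Lemma \ref{lemma_b(c0;c0)} yields $\mathbb{B}(r_1,\dots,r_4;s_1,\dots,s_6)\in B(c_0)$, with norm equal to the supremum of the row $l_1$-norms and hence bounded above by $\max_{i,j}\{|r_i|+|s_j|\}$. There is no real obstacle to overcome here: the entire content is the elementary observation that the band structure forces at most two nonzero entries per row and per column, after which boundedness and the norm estimate are immediate. The statement is in fact just the instance $l=4$, $l'=6$ of the earlier Corollary, so one could alternatively dispense with the direct verification and simply cite that general result.
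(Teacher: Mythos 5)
Your proposal is correct and matches the paper's approach: the paper states this corollary without a separate proof precisely because it is the instance $l=4$, $l'=6$ of the general corollary in Section 3, which in turn is exactly the application of Lemma \ref{lemma_b(c0;c0)} that you carry out (rows have at most two nonzero entries, giving uniformly bounded $l_1$-norms; columns are eventually zero, hence in $c_0$). Your observation that the norm bound follows from the lemma's identification of the operator norm with the supremum of the row $l_1$-norms is also exactly the intended argument.
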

\begin{theorem}
$\sigma(\mathbb{B}(r_1, \dots, r_4; s_1, \dots, s_6),c_0)=S$.
\end{theorem}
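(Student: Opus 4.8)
The plan is to recognize this statement as the specialization of Theorem \ref{th_spectrum} to the parameters $l=4$ and $l'=6$, so that $L=\mathrm{lcm}(4,6)=12$. With these values the general set $S$ defined just before Theorem \ref{th_spectrum} becomes precisely the set $S$ defined in this section, since $(\vert\lambda-r_1\vert\cdots\vert\lambda-r_l\vert)^{1/l}$ reduces to $(\vert\lambda-r_1\vert\vert\lambda-r_2\vert\vert\lambda-r_3\vert\vert\lambda-r_4\vert)^{1/4}$ and $(\vert s_1\vert\cdots\vert s_{l'}\vert)^{1/l'}$ reduces to $(\vert s_1\vert\vert s_2\vert\vert s_3\vert\vert s_4\vert\vert s_5\vert\vert s_6\vert)^{1/6}$. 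Thus the result follows by invoking Theorem \ref{th_spectrum} directly; the remaining task is simply to confirm that both inclusions go through verbatim for these concrete parameters.

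For the inclusion $\sigma(\mathbb{B}(r_1,\dots,r_4;s_1,\dots,s_6),c_0)\subseteq S$, I would take $\lambda\notin S$, which by definition gives the strict reversed inequality $(\vert\lambda-r_1\vert\vert\lambda-r_2\vert\vert\lambda-r_3\vert\vert\lambda-r_4\vert)^{1/4}>(\vert s_1\vert\vert s_2\vert\vert s_3\vert\vert s_4\vert\vert s_5\vert\vert s_6\vert)^{1/6}$. Lemma \ref{lemma_inverse} then applies immediately and yields $(\mathbb{B}-\lambda I)^{-1}\in B(c_0)$, so $\lambda$ is a regular value and $\lambda\notin\sigma(\mathbb{B},c_0)$.

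For the reverse inclusion $S\subseteq\sigma(\mathbb{B},c_0)$, I would fix $\lambda\in S$ and split into two cases exactly as in the proof of Theorem \ref{th_spectrum}. If $\lambda$ coincides with one of $r_1,r_2,r_3,r_4$, then the range of $\mathbb{B}-\lambda I$ fails to be dense in $c_0$, placing $\lambda$ in the spectrum. Otherwise $\mathbb{B}-\lambda I$ is a triangle with the explicit inverse $(z_{nk})$ from \eqref{inverse}; taking $y=(1,0,0,\dots)\in c_0$ and $x=(\mathbb{B}-\lambda I)^{-1}y$, the subsequence along multiples of $L=12$ satisfies $x_{12n}=\tfrac{1}{r_1-\lambda}\bigl\{(s_1\cdots s_6)^{2}/\{(r_1-\lambda)\cdots(r_4-\lambda)\}^{3}\bigr\}^{n}$, whose modulus does not tend to $0$ because the defining inequality $(\vert\lambda-r_1\vert\cdots\vert\lambda-r_4\vert)^{1/4}\leq(\vert s_1\vert\cdots\vert s_6\vert)^{1/6}$ forces the common ratio to have modulus at least one. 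Hence $x\notin c_0$ and $\lambda\in\sigma(\mathbb{B},c_0)$.

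I expect no genuine obstacle here, as the argument is a transcription of the general case. The only point demanding a little care is the bookkeeping of the exponents $L/l'=2$ and $L/l=3$ in the geometric factor governing $x_{12n}$, to ensure the ratio is correctly read off from \eqref{inverse} and that the non-convergence to zero is justified by the inequality defining $S$.
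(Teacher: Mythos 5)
Your proposal is correct and matches the paper's approach exactly: the paper states this theorem without separate proof, as an immediate specialization of Theorem \ref{th_spectrum} (together with Lemma \ref{lemma_inverse}) to $l=4$, $l'=6$, $L=12$. Your bookkeeping of the exponents $L/l'=2$ and $L/l=3$ and of the geometric ratio in $x_{12n}$ is accurate, so nothing further is needed.
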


\begin{theorem}
$\sigma_p(\mathbb{B}(r_1, \dots, r_4; s_1, \dots, s_6),c_0)=\phi$.
\end{theorem}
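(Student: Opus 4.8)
The plan is to recognize that this statement is precisely the case $l=4$, $l'=6$ of Theorem \ref{th_pointspectrum}, so the cleanest route is to invoke that result directly. For completeness, however, I would reproduce the specialized argument, which is short and self-contained and relies only on the two-term recurrence coming from the eigenvalue equation together with the standing hypothesis that all subdiagonal entries are nonzero.

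First I would assume, for contradiction, that $\lambda \in \sigma_p(\mathbb{B}(r_1,\dots,r_4;s_1,\dots,s_6),c_0)$, so that there is a nonzero $x=(x_k)\in c_0$ with $\mathbb{B}x=\lambda x$. Writing out the rows as in \eqref{eq_bx=lambdax} gives the recurrence
\[
s_{(k-1)(\text{mod}\ 6)+1}\,x_{k-1} + r_{k(\text{mod}\ 4)+1}\,x_k = \lambda x_k
\]
for every $k\in\mathbb{N}_0$. Let $x_{k_0}$ be the first nonzero coordinate of $x$. Since $x_{k_0-1}=0$, the equation at index $k_0$ collapses to $r_{k_0(\text{mod}\ 4)+1}x_{k_0}=\lambda x_{k_0}$, forcing $\lambda=r_{k_0(\text{mod}\ 4)+1}$; thus $\lambda$ must coincide with one of $r_1,r_2,r_3,r_4$.

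The key step is to exploit the period-$4$ repetition of the diagonal. Evaluating the recurrence at $k=k_0+4$ and using $(k_0+4)(\text{mod}\ 4)+1=k_0(\text{mod}\ 4)+1$, the diagonal coefficient again equals $\lambda$, so the diagonal terms cancel and we are left with $s_{(k_0+3)(\text{mod}\ 6)+1}x_{k_0+3}=0$. Since every $s_j\neq 0$, this yields $x_{k_0+3}=0$. I would then run the recurrence backward at $k=k_0+3,\,k_0+2,\,k_0+1$: at each stage the vanishing of the higher-index coordinate combines with $s_j\neq 0$ to force the next lower coordinate to vanish, ending with $x_{k_0}=0$, which contradicts the choice of $k_0$. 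Hence no such eigenvalue exists and $\sigma_p(\mathbb{B}(r_1,\dots,r_4;s_1,\dots,s_6),c_0)=\phi$.

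I do not expect any genuine obstacle here, since the argument is a verbatim specialization of Theorem \ref{th_pointspectrum}; the only points requiring care are the bookkeeping of the residues modulo $4$ and modulo $6$ when identifying which $r$'s and $s$'s appear, and the repeated use of the hypothesis that all subdiagonal entries $s_1,\dots,s_6$ are nonzero, which is exactly what makes the backward cascade $x_{k_0+3}\to x_{k_0+2}\to x_{k_0+1}\to x_{k_0}$ valid.
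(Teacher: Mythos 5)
Your proposal is correct and follows essentially the same route as the paper: the paper also obtains this theorem by specializing Theorem \ref{th_pointspectrum} (the case $l=4$, $l'=6$), whose proof is exactly the argument you reproduce—pinning $\lambda=r_{k_0(\text{mod}\ 4)+1}$ at the first nonzero coordinate, using the period-$4$ repetition at $k=k_0+4$ to get $x_{k_0+3}=0$, and cascading backward via the nonvanishing of the $s_j$ to the contradiction $x_{k_0}=0$.
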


\begin{theorem}
$\sigma_p(\mathbb{B}(r_1, \dots, r_4; s_1, \dots, s_6)^*,c_0^*\cong l_1)=S_1$.
\end{theorem}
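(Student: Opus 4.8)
The plan is to recognize that this statement is exactly the specialization of Theorem \ref{th_pointspectrumofadjoint} to the parameters $l = 4$, $l' = 6$, for which $L = \mathrm{lcm}(4,6) = 12$, $L/l = 3$ and $L/l' = 2$. With these values the defining inequality $(|\lambda - r_1|\cdots|\lambda - r_l|)^{1/l} < (|s_1|\cdots|s_{l'}|)^{1/l'}$ of Theorem \ref{th_pointspectrumofadjoint} collapses precisely to the inequality defining $S_1$, so the conclusion is immediate. Nonetheless I would also record the direct argument, since it makes the constants explicit and confirms that no degeneracy arises for this particular pair $(l,l')$.

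First I would pass to $c_0^* \cong l_1$ and look for a nonzero $x = (x_k) \in l_1$ with $\mathbb{B}^* x = \lambda x$. Reading off the rows of the upper-triangular matrix $\mathbb{B}^*$, the eigenvalue equation is $r_{\sigma(k)} x_k + s_{\sigma'(k)} x_{k+1} = \lambda x_k$ for every $k \in \mathbb{N}_0$, which rearranges into the first-order recurrence $x_{k+1} = \dfrac{\lambda - r_{\sigma(k)}}{s_{\sigma'(k)}}\, x_k$. Since each $s_j \neq 0$ this determines $x$ up to the scalar $x_0$, and iterating over one full period yields the multiplier $q = \dfrac{\big((\lambda-r_1)(\lambda-r_2)(\lambda-r_3)(\lambda-r_4)\big)^{3}}{(s_1 s_2 s_3 s_4 s_5 s_6)^{2}}$, so that $x_{kL} = q^k x_0$.

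Next I would solve the recurrence on each of the twelve residue classes modulo $L = 12$, writing $x_{kL+j} = c_j\, q^k x_0$ for $j = 0,1,\dots,11$, where $c_j$ is the explicit partial product of the factors $(\lambda - r_{\sigma(i)})/s_{\sigma'(i)}$ accumulated from index $0$ to $j-1$ (so $c_0 = 1$ and $c_1 = (\lambda - r_1)/s_1$). Splitting the $l_1$-norm along these classes gives
\begin{equation*}
\sum_{k=0}^{\infty} |x_k| = \Big(\sum_{j=0}^{11} |c_j|\Big)\, |x_0| \sum_{k=0}^{\infty} |q|^k ,
\end{equation*}
which is finite for $x_0 \neq 0$ if and only if the geometric ratio satisfies $|q| < 1$. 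Taking $L$-th roots, $|q| < 1$ is equivalent to $(|\lambda - r_1||\lambda - r_2||\lambda - r_3||\lambda - r_4|)^{1/4} < (|s_1||s_2||s_3||s_4||s_5||s_6|)^{1/6}$, that is, to $\lambda \in S_1$. This delivers both inclusions simultaneously, so no separate converse computation is needed.

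The step requiring the most care is the bookkeeping of the period: because $\gcd(4,6) = 2$, the diagonal pattern $(r)$ and the sub-diagonal pattern $(s)$ only re-synchronize after $L = 12$ steps rather than $24$, and within one period the $r$-block is traversed $L/l = 3$ times while the $s$-block is traversed $L/l' = 2$ times. Getting these exponents correct—and hence the precise form of $q$ and of the prefactors $c_j$—is the main obstacle; once $q$ is identified the convergence criterion and the final inequality are routine, and the boundary term $|x_0|$ causes no trouble.
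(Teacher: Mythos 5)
Your proposal is correct and takes essentially the same route as the paper: the paper obtains this statement purely by specializing Theorem \ref{th_pointspectrumofadjoint} to $l=4$, $l'=6$, $L=12$ (your first paragraph), and your direct argument --- solving the recurrence $x_{k+1} = \frac{\lambda - r_{\sigma(k)}}{s_{\sigma'(k)}}\, x_k$, splitting the $l_1$-norm along residue classes modulo $L$, and testing the geometric ratio $q$ --- is precisely the paper's own proof of that general theorem, with the correct exponents $L/l=3$ and $L/l'=2$. Nothing further is needed.
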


\begin{theorem}
$\sigma_r(\mathbb{B}(r_1, \dots, r_4; s_1, \dots, s_6),c_0)=S_1$.
\end{theorem}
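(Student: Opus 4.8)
The plan is to read off this statement as the specialization of the general residual spectrum formula, Theorem \ref{th_residualspectrum}, to the parameters $l = 4$ and $l' = 6$. For these values the least common multiple is $L = \mathrm{lcm}(4,6) = 12$, and the displayed band structure---whose diagonal cycles through $r_1, r_2, r_3, r_4$ and whose subdiagonal cycles through $s_1, \dots, s_6$ with period $12$---is exactly the instance of the defining case relations $b_{ij} = r_{j(\mathrm{mod}\ l)+1}$ on the diagonal and $b_{ij} = s_{j(\mathrm{mod}\ l')+1}$ on the subdiagonal for these parameters. Consequently the hypotheses underlying Theorem \ref{th_residualspectrum} are met, and substituting $l = 4$, $l' = 6$ into its conclusion produces precisely the set
\[
\left\{\lambda \in \mathbb{C}: (\vert \lambda - r_1\vert \cdots \vert \lambda - r_4\vert)^{\frac{1}{4}} < (\vert s_1 \vert \cdots \vert s_{6}\vert)^{\frac{1}{6}}\right\} = S_1 .
\]

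A fully self-contained route, mirroring the proof of Theorem \ref{th_residualspectrum}, is to invoke the identity $\sigma_r(T,X) = \sigma_p(T^*, X^*) \setminus \sigma_p(T, X)$, valid for a bounded linear operator on a Banach space. The two theorems of this section immediately preceding the present one already furnish both ingredients: the point spectrum of $\mathbb{B}(r_1,\dots,r_4; s_1,\dots,s_6)$ over $c_0$ is $\phi$, while the point spectrum of its adjoint over $c_0^* \cong l_1$ equals $S_1$. Forming the set difference then gives $\sigma_r = S_1 \setminus \phi = S_1$, which is the desired conclusion.

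Since the result is a pure specialization, I do not anticipate any genuine obstacle. The only point demanding a moment's care is confirming that the concrete $12$-periodic matrix exhibited at the start of this section is indeed the $(l, l') = (4, 6)$ case of the general operator $\mathbb{B}(r_1,\dots,r_l; s_1,\dots,s_{l'})$, so that the labelled general theorems apply verbatim; one checks this by comparing the first twelve diagonal and subdiagonal entries against $r_{j(\mathrm{mod}\ 4)+1}$ and $s_{j(\mathrm{mod}\ 6)+1}$. Once that identification is in place, no further computation is required.
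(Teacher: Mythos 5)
Your proposal is correct and follows the paper exactly: the paper derives this result (with no separate proof) as the specialization of Theorem \ref{th_residualspectrum} to $l=4$, $l'=6$, and that general theorem is itself proved via the identity $\sigma_r(T,X)=\sigma_p(T^*,X^*)\setminus\sigma_p(T,X)$ combined with the point-spectrum results, which is precisely your second, self-contained route. No gaps.
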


\begin{theorem}
$\sigma_c(\mathbb{B}(r_1, \dots, r_4; s_1, \dots, s_6),c_0)=S_2$.
\end{theorem}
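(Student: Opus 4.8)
The plan is to obtain this result as an immediate consequence of the disjoint partition of the spectrum together with the three preceding theorems of this section, exactly mirroring the argument used for the general matrix in Theorem \ref{th_continuousspectrum}. First I would recall the basic fact that, for a bounded linear operator $T$ on a Banach space $X$, the spectrum decomposes as the disjoint union $\sigma(T,X) = \sigma_p(T,X) \cup \sigma_r(T,X) \cup \sigma_c(T,X)$. The present statement is simply the $l=4$, $l'=6$ specialization of the general theory, where $L = \mathrm{lcm}(4,6) = 12$.

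Next I would assemble the three ingredients already established for $\mathbb{B}(r_1,\dots,r_4;s_1,\dots,s_6)$, namely $\sigma(\mathbb{B},c_0)=S$, $\sigma_p(\mathbb{B},c_0)=\phi$, and $\sigma_r(\mathbb{B},c_0)=S_1$, each being the instance of Theorems \ref{th_spectrum}, \ref{th_pointspectrum} and \ref{th_residualspectrum} respectively with $l=4$ and $l'=6$. Solving the disjoint decomposition for the continuous part then yields
\[
\sigma_c(\mathbb{B},c_0) = \sigma(\mathbb{B},c_0) \setminus \bigl(\sigma_p(\mathbb{B},c_0) \cup \sigma_r(\mathbb{B},c_0)\bigr) = S \setminus S_1.
\]

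It then remains to verify the purely set-theoretic identity $S \setminus S_1 = S_2$. Writing $P(\lambda) = (\vert \lambda - r_1\vert \vert \lambda - r_2\vert \vert \lambda-r_3\vert \vert \lambda - r_4\vert)^{1/4}$ and $Q = (\vert s_1 \vert \vert s_2\vert \vert s_3\vert \vert s_4\vert \vert s_5\vert \vert s_{6}\vert)^{1/6}$, the set $S$ is cut out by $P(\lambda) \le Q$, the set $S_1$ by the strict inequality $P(\lambda) < Q$, and $S_2$ by the equality $P(\lambda) = Q$. Hence $S = S_1 \cup S_2$ is a disjoint union and $S \setminus S_1 = S_2$, which completes the chain of equalities.

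I do not anticipate any genuine analytic obstacle, since all the hard estimates have already been carried out in establishing $\sigma(\mathbb{B},c_0)$, $\sigma_p(\mathbb{B},c_0)$ and $\sigma_r(\mathbb{B},c_0)$. The only point requiring a moment of care is confirming that $S_1$ and $S_2$ are genuinely disjoint, so that the set difference is legitimate; this is clear because a strict inequality and an equality in the same quantities cannot hold simultaneously. Thus the proof is essentially a bookkeeping consequence of the earlier results.
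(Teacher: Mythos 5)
Your proposal is correct and follows exactly the paper's own route: the paper obtains this result as the $l=4$, $l'=6$ specialization of its general Theorem \ref{th_continuousspectrum}, whose proof likewise invokes the disjoint decomposition $\sigma = \sigma_p \cup \sigma_r \cup \sigma_c$ together with $\sigma(\mathbb{B},c_0)=S$, $\sigma_p(\mathbb{B},c_0)=\phi$, and $\sigma_r(\mathbb{B},c_0)=S_1$ to conclude $\sigma_c(\mathbb{B},c_0)=S\setminus S_1=S_2$. Your additional remark verifying the set-theoretic identity $S\setminus S_1=S_2$ is a harmless elaboration of a step the paper treats as immediate.
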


\begin{theorem}
$\{r_1, r_2, r_3, r_4\} \subseteq III_1\sigma(\mathbb{B}(r_1, \dots, r_4; s_1, \dots, s_6),c_0)$.
\end{theorem}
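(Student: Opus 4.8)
The plan is to read this statement as the $l=4$, $l'=6$ instance of the general theorem $\{r_1,\dots,r_l\}\subseteq III_1\sigma(\mathbb{B},c_0)$ established in Section~3, for which the relevant period is $L=\mathrm{lcm}(4,6)=12$. One may simply invoke that theorem with these parameters; to keep the present section self-contained I would instead retrace its argument in the concrete periodic setting, fixing an index $i\in\{1,2,3,4\}$ and treating $r_i$.

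First I would identify $r_i$ as a residual value, which automatically places it in column~III. Substituting $\lambda=r_i$ makes the product $(\vert\lambda-r_1\vert\vert\lambda-r_2\vert\vert\lambda-r_3\vert\vert\lambda-r_4\vert)^{1/4}$ vanish, which is strictly smaller than the positive quantity $(\vert s_1\vert\cdots\vert s_6\vert)^{1/6}$; hence $r_i\in S_1=\sigma_r(\mathbb{B},c_0)$ by Theorem~\ref{th_residualspectrum}. Since $\sigma_r(\mathbb{B},c_0)=III_1\sigma(\mathbb{B},c_0)\cup III_2\sigma(\mathbb{B},c_0)$, the only remaining task is to decide between $III_1$ and $III_2$.

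To land in $III_1$ I must show that $\mathbb{B}-r_iI$ has a bounded inverse, which by Theorem~\ref{th_adjointontobounded} is equivalent to the surjectivity of $(\mathbb{B}-r_iI)^*$. I would therefore take an arbitrary $y=(y_k)\in l_1\cong c_0^*$ and solve $(\mathbb{B}-r_iI)^*x=y$ by back-substitution. Since the diagonal and sub-diagonal entries of $\mathbb{B}$ repeat with period $12$, the solution $x$ decomposes into twelve subsequences indexed by residues modulo $12$, each coordinate $x_{mL+j}$ being a finite combination of $y_{mL+j-1}$ and $y_{mL+j-2}$ with coefficients assembled from the nonzero parameters $s_1,\dots,s_6$ and the differences $r_p-r_i$. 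Summing the triangle inequality over these twelve blocks then bounds $\sum_n\vert x_n\vert$ by a fixed multiple of $\sum_k\vert y_k\vert$, so $x\in l_1$ and $(\mathbb{B}-r_iI)^*$ is onto.

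The main obstacle is this final $l_1$-estimate: because $l=4$ and $l'=6$ differ, the period-$12$ recursion does not collapse to a single closed form as it would for a one-parameter band, so one must track carefully which $s_j$ and which factors $r_p-r_i$ appear in each block and verify that the resulting coefficients stay uniformly bounded in $m$. Once surjectivity of $(\mathbb{B}-r_iI)^*$ is in hand, Theorem~\ref{th_adjointontobounded} delivers a bounded inverse, so $r_i\in III_1\sigma(\mathbb{B},c_0)$; repeating the computation for $i=1,2,3,4$ proves the theorem.
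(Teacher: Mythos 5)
Your proposal is correct and takes essentially the same route as the paper: the paper presents this theorem as the $l=4$, $l'=6$ instance of the Section~3 result, whose proof is exactly your chain --- $r_i\in\sigma_r(\mathbb{B},c_0)$ by Theorem~\ref{th_residualspectrum} (hence column III), then surjectivity of $(\mathbb{B}-r_iI)^*$ on $l_1$ via blockwise back-substitution with period-$L$ coefficients, then Theorem~\ref{th_adjointontobounded} to conclude a bounded inverse and membership in $III_1$. The only small inaccuracy, which the paper's own write-up shares, is that each $x_{mL+j}$ is a combination of up to four (not two) consecutive $y$-coordinates, since the recursion only resets at indices $n$ where the diagonal coefficient $r_{\sigma(n)}-r_i$ vanishes, i.e.\ every $l=4$ steps; the coefficients are still independent of $m$, so the $l_1$ estimate is unaffected.
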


\begin{theorem}
 $S_1\setminus \{r_1, r_2, r_3, r_4\}\subseteq III_2\sigma(\mathbb{B}(r_1, \dots, r_4; s_1, \dots, s_6),c_0)$.
\end{theorem}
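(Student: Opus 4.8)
The plan is to specialize the general inclusion established above, namely $\sigma_r(\mathbb{B}, c_0)\setminus\{r_1,\dots,r_l\}\subseteq III_2\sigma(\mathbb{B}, c_0)$, to the parameters $l=4$, $l'=6$, for which $L=\mathrm{lcm}(4,6)=12$. Since the specialized residual-spectrum theorem identifies $\sigma_r(\mathbb{B}(r_1,\dots,r_4;s_1,\dots,s_6),c_0)=S_1$, choosing $\lambda\in S_1\setminus\{r_1,r_2,r_3,r_4\}$ is the same as choosing $\lambda\in\sigma_r$ with $\lambda$ distinct from every $r_i$. By Goldberg's classification (Table \ref{table1}) the residual spectrum splits as $\sigma_r=III_1\sigma\cup III_2\sigma$, so $\lambda$ lies in exactly one of these two subdivisions, and the entire argument reduces to ruling out $III_1$.

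First I would observe that because $\lambda\neq r_1,r_2,r_3,r_4$, the matrix $\mathbb{B}-\lambda I$ is a triangle with nonzero diagonal, so its algebraic inverse $(z_{nk})$ exists and is given by the explicit formula \eqref{inverse}; injectivity (hence existence of $T_\lambda^{-1}$ on the range) is guaranteed by the empty point spectrum established in Theorem \ref{th_pointspectrum}. The subdivision $III_1$ corresponds to $T_\lambda^{-1}$ existing and being \emph{bounded}, whereas $III_2$ corresponds to $T_\lambda^{-1}$ existing but being \emph{unbounded}. Thus it suffices to show that the operator induced by $(z_{nk})$ fails to be bounded on $c_0$, and by Lemma \ref{lemma_b(c0;c0)} this amounts to verifying $\sup_n\sum_{k}\lvert z_{nk}\rvert=\infty$.

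The key computation is to evaluate the governing ratio in the row-sum expression \eqref{eq_inverse}. With $L=12$, $l=4$, $l'=6$ one has $L/l'=2$ and $L/l=3$, so the common ratio of the geometric-type sums there is
\[
q=\frac{(\lvert s_1\rvert\,\lvert s_2\rvert\,\lvert s_3\rvert\,\lvert s_4\rvert\,\lvert s_5\rvert\,\lvert s_6\rvert)^{2}}{\{\lvert \lambda-r_1\rvert\,\lvert \lambda-r_2\rvert\,\lvert \lambda-r_3\rvert\,\lvert \lambda-r_4\rvert\}^{3}}.
\]
Since $\lambda\in S_1$ gives the \emph{strict} inequality $(\lvert \lambda-r_1\rvert\cdots\lvert \lambda-r_4\rvert)^{1/4}<(\lvert s_1\rvert\cdots\lvert s_6\rvert)^{1/6}$, raising both sides to the twelfth power yields $q>1$. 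Consequently, along the rows $n=\widetilde{m}L$ the partial geometric sum $\sum_{j=0}^{\widetilde{m}}q^{\,j}$ appearing in \eqref{eq_inverse} diverges as $\widetilde{m}\to\infty$, so $\sum_{k}\lvert z_{nk}\rvert\to\infty$ along this subsequence of rows and hence $\sup_n\sum_{k}\lvert z_{nk}\rvert=\infty$.

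By Lemma \ref{lemma_b(c0;c0)} the formal inverse $(z_{nk})$ therefore does not define a bounded operator on $c_0$; that is, $T_\lambda^{-1}$ exists but is unbounded, which places $\lambda$ in $III_2\sigma$ rather than $III_1\sigma$. As $\lambda$ was an arbitrary point of $S_1\setminus\{r_1,r_2,r_3,r_4\}$, the claimed inclusion follows. The only delicate point — and the step I expect to require the most care — is confirming that the strict inequality defining $S_1$ forces $q>1$ (not merely $q\geq 1$) and hence genuine divergence of the row sums; this is precisely where \emph{strictness} in the definition of $S_1$ is essential, separating it from the boundary set $S_2$, on which $q=1$ and the divergence argument would collapse.
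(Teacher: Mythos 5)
Your proposal is correct and takes essentially the same route as the paper: the paper proves the general inclusion $\sigma_r(\mathbb{B},c_0)\setminus\{r_1,\dots,r_l\}\subseteq III_2\sigma(\mathbb{B},c_0)$ in Section 3 by splitting the residual spectrum into $III_1\cup III_2$ via Table \ref{table1} and then using the strict inequality defining $\sigma_r$ to conclude that the row sums $\sum_k \lvert z_{nk}\rvert$ of the formal inverse in \eqref{eq_inverse} diverge, so $\mathbb{B}-\lambda I$ has no bounded inverse; the Section 4 statement is exactly this result specialized to $l=4$, $l'=6$. Your explicit computation that $L=12$ gives the ratio $q=(\lvert s_1\rvert\cdots\lvert s_6\rvert)^{2}/(\lvert \lambda-r_1\rvert\cdots\lvert \lambda-r_4\rvert)^{3}>1$ merely makes explicit the detail the paper leaves implicit in its appeal to that strict inequality.
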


In particular, if we take $r_1=1-i,$ $r_2=-i,$ $r_3=-1.5,$ $r_4=-i$ and $s_1= i,$ $s_2= 1+i,$ $s_3= -2,$ $s_4= -1.5,$ $s_5= 1-i,$ $s_6= -1$, then the spectrum is given by
\[\sigma(\mathbb{B}(r_1, \dots, r_4; s_1, \dots, s_6),c_0)=\left\{ \lambda \in \mathbb{C}: (\vert \lambda - 1+i\vert \vert \lambda +i\vert^2 \vert \lambda+1.5\vert)^{\frac{1}{4}} \leq 6^{\frac{1}{6}}\right\},\]
which is shown by the shaded region in Figure 1.

\begin{figure}\label{fig1}
\begin{center}
\includegraphics[scale=0.9]{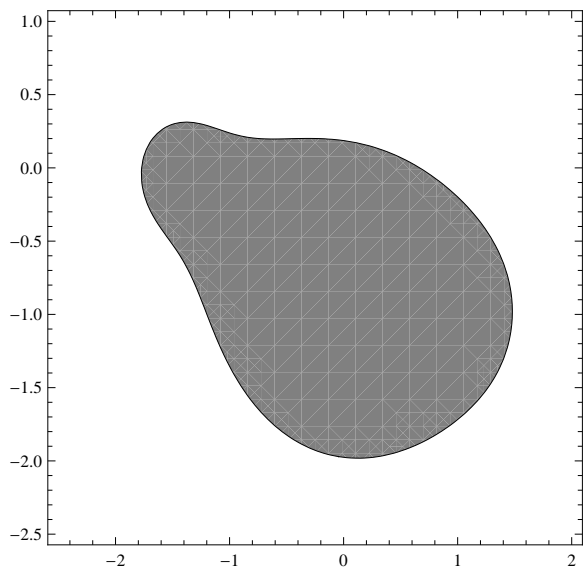}
\caption{Spectrum of $\mathbb{B}(r_1, \dots, r_4; s_1, \dots, s_6)$.}
\end{center}
\end{figure}

\bibliographystyle{plain}
\bibliography{mybib}

\end{document}